\theoremstyle{definition}
\newtheorem{definition}{Definition}
\newtheorem{theorem}[definition]{Theorem}
\newtheorem{remark}[definition]{Remark}
\newtheorem{proposition}[definition]{Proposition}
\newtheorem{lemma}[definition]{Lemma}
\newtheorem{corollary}[definition]{Corollary}
\newtheorem{example}[definition]{Example}
\newcounter{enumctr}
\newcommand{\N}{\mathbb{N}}
\newcommand{\R}{\mathbb{R}}
\newcommand{\rT}{\mathrm{T}}
\newcommand{\Z}{\mathbb{Z}}
\newcommand{\eps}{\varepsilon}
\renewcommand{\phi}{\varphi}
\DeclareMathOperator{\diag}{diag}
\newcommand{\cM}{\mathcal{M}}
\begin{document}
\title{\vspace*{-10mm}
A new concept of local metric entropy for\\ finite-time nonautonomous dynamical systems}
\author{Luu Hoang Duc\\
Institute of Mathematics, VAST, Vietnam
\& \\
Center for Dynamics,
Technische Universit\"at Dresden\\
{\tt lhduc@math.ac.vn, hoang\_duc.luu@tu-dresden.de}\\[3ex]
Stefan Siegmund \\
Institute for Analysis \& Center for Dynamics\\
Technische Universit\"at Dresden\\
{\tt stefan.siegmund@tu-dresden.de}\\[3ex]
}

\maketitle
\begin{abstract}
We introduce a new concept of finite-time entropy which is a local version of the classical concept of metric entropy. Based on that, a finite-time version of Pesin's entropy formula and also an explicit formula of finite-time entropy for $2$-D systems are derived. We also discuss about how to apply the finite-time entropy field to detect special dynamical behavior such as Lagrangian coherent structures.
\end{abstract}
\vspace{1ex} \noindent {\bf Key words and phrases:} finite-time metric entropy (FTME), finite-time Lyapunov exponents (FTLE), Pesin's formula, Lagrangian coherent structures (LCS)
\\[1ex]
{\bf 2010 Mathematics Subject Classification:}
54C70;  	
28D20;  	
37A35;  	
37M25;  	
37D25  	

%


\section{Introduction}

\emph{Metric} or \emph{measure-theoretic entropy} for a transformation was introduced by Kolmogorov and Sinai in 1959; the ideas go back to Shannon’s information theory (see Young \cite{young} for a well-written survey and the references therein). Concepts and results on metric entropy can be formulated in general settings \cite{young}. We recall some basic facts for the special situation of a map $T : M \rightarrow M$ on a compact set $M \subset \R^n$ which is measure-preserving w.r.t.\ the Lebesgue measure $\mu$, i.e.\ for every Borel set $A$, $T^{-1} A$ is also a Borel set, and $\mu(A) = \mu(T^{-1} A)$. The \emph{metric entropy} $h_\mu(T)$ of $T$ w.r.t.\ $\mu$ can be defined as the supremum over all entropies of finite partitions of $M$. However, we are particularly interested in a local characterization of metric entropy which goes back to Bowen \cite[Definition 6 \& Proposition 7]{bowen} (see also \cite{brinkatok83, young}) and was then generalized by Thieullen \cite{thieullen1, thieullen2}. According to Thieullen \cite{thieullen1}, for any $x \in M$, $n \in \N, \alpha \geq 0$ and $\eps >0$, define
\begin{equation}\label{autonomousstableset}
B^\alpha(x,n,\eps) := \{y \in X: \sup_{0\leq i\leq n} \| T^ix - T^i y \| e^{i \alpha}< \eps \}
\end{equation}
and the local quantities 
\begin{equation}\label{local.entropy}
\begin{array}{rcl}
  \overline{h}_\mu(\alpha,T,x) 
  &:=& 
  \lim_{\eps \to 0} \limsup_{n\to \infty} -\frac{1}{n} \log \mu\big(B^\alpha(x,n,\eps)\big) ,
\\[1ex]
  \underline{h}_\mu(\alpha,T,x) 
  &:=&
  \lim_{\eps \to 0} \liminf_{n\to \infty} -\frac{1}{n} \log \mu\big(B^\alpha(x,n,\eps)\big) .
\end{array}
\end{equation}


If $T$ is a $C^2$ diffeomorphism then $\overline{h}_\mu(\alpha,T,x) = \underline{h}_\mu(\alpha,T,x) =: h_\mu(\alpha,T,x)$ and we call $h_\mu$ the local $\alpha$-entropy. Moreover, if $\alpha=0$ and $T$ is ergodic then $h_\mu(\alpha,T,x)= h_\mu(T)$ for almost every $x\in M$ \cite{brinkatok83}. Local $\alpha$-entropy also has the interpretation of being the rate of loss of information on nearby orbits. Its relation to the Lyapunov exponents of $T$ are described by Pesin's formula \cite{pesin77,qian,thieullen1}. Let $\lambda_n(x) \leq \ldots \leq \lambda_1(x)$ denote the Lyapunov exponents of $T$ at the point $x$. Write $a^+ = \max(a,0)$.


\begin{theorem}[Pesin's formula for $\alpha$-entropy \cite{thieullen1}]\label{alphaentropy}
Let $T: M \to M$ be a $C^2$ diffeomorphism preserving the Lebesgue measure $\mu$. Then for almost every $x\in M$
\begin{equation*}
  h_\mu(\alpha,T,x) 
  =  
  \left\{ 
    \begin{array}{ll} 
      \sum_{i=1}^n (\lambda_i(x)+\alpha)^+ 
      &\text{if}\  0 \leq \alpha \leq - \lambda_n(x) \nonumber
      \,,
  \\[1ex]
    n \alpha
    &\text{if}\  \alpha \geq -\lambda_n(x) 
      \,.
    \end{array}
  \right.
\end{equation*}
\end{theorem}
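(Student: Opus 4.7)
The plan is to invoke Oseledets' multiplicative ergodic theorem at $\mu$-a.e.\ $x$ to obtain the Lyapunov exponents $\la_1(x) \geq \ldots \geq \la_n(x)$ (listed with multiplicity) together with the measurable Oseledets splitting $\R^n = \bigoplus_\la E^\la(x)$, characterised by $\lim_{k\to\infty} \tfrac{1}{k}\log \|DT^k(x) v\| = \la$ for $v \in E^\la(x) \setminus \{0\}$. Because $T$ preserves the Lebesgue measure, Birkhoff's theorem applied to $\log|\det DT|$ yields $\sum_{i=1}^n \la_i(x) = 0$ almost everywhere. This single identity reduces the two cases of the theorem to one: whenever $\alpha \geq -\la_n(x)$, every summand $(\la_i(x)+\alpha)^+$ equals $\la_i(x)+\alpha$, and the total collapses to $n\alpha$. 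It therefore suffices to prove $h_\mu(\alpha,T,x) = \sum_{i=1}^n (\la_i(x)+\alpha)^+$ on a set of full measure.

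Next I would apply Pesin's non-uniform hyperbolicity theory. Since $T$ is $C^2$, on a filtration by regular (Pesin) blocks exhausting a set of full measure there exist Lyapunov-adapted charts in which $DT^k(x)$ acts on $E^\la(x)$ with near-spectral growth $\|DT^k(x) v\| = e^{k\la + o(k)}\|v\|$ uniformly in $k$, and in which $T$ itself is a $C^2$-small perturbation of its derivative at $x$. In such charts, up to tempered distortion coming from the angles between the Oseledets subspaces and from the nonlinearity of $\exp_x$, the Bowen-type set $B^\alpha(x,N,\eps)$ (where I write $N$ for the iteration count, to avoid clash with the dimension $n$) coincides with the image under $\exp_x$ of
\[
  \widetilde B^\alpha(x,N,\eps) := \Bigl\{\, v \in \R^n : \sup_{0 \leq k \leq N} \|DT^k(x) v\|\, e^{k\alpha} < \eps \,\Bigr\}.
\]
Decomposing $v = \sum_\la v^\la$ along the Oseledets splitting, the supremum in the $E^\la$-direction is attained at $k = N$ when $\la + \alpha > 0$ and at $k = 0$ otherwise, so the constraint becomes $\|v^\la\| < \eps\, e^{-N(\la+\alpha)^+}$ up to subexponential factors.

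Multiplying these one-dimensional bounds, with the correct multiplicities and using that the Oseledets frame is tempered, then yields
\[
  \mu\bigl(B^\alpha(x,N,\eps)\bigr) = \eps^n \exp\Bigl(-N \sum_{i=1}^n (\la_i(x)+\alpha)^+ + o(N)\Bigr).
\]
Applying $-\tfrac{1}{N}\log$, the prefactor $\eps^n$ contributes $-\tfrac{n}{N}\log\eps \to 0$ as $N \to \infty$ since the dimension $n$ is fixed, leaving $\sum_i (\la_i(x)+\alpha)^+ + o(1)$. Taking $N \to \infty$ first and then $\eps \to 0$ gives the desired formula, with $\overline h_\mu$ and $\underline h_\mu$ coinciding because the Pesin estimates are two-sided.

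The main obstacle is precisely this volume estimate: to conclude $\overline{h}_\mu(\alpha,T,x) = \underline{h}_\mu(\alpha,T,x)$, the distortion between $T$ and its linearisation at $x$ must be controlled uniformly along the full window $0 \leq k \leq N$ on a set of nearly full measure. This is exactly what Pesin's $C^2$ theory delivers through the subexponential tempering of the Lyapunov-chart constants, and is the step where the $C^2$ hypothesis is genuinely used; without it the lower bound on $\mu(B^\alpha)$, and hence the upper bound on $h_\mu$, can collapse.
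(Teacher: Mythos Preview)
The paper does not prove this theorem at all. Theorem~\ref{alphaentropy} is stated in the introduction as a background result, attributed to Thieullen~\cite{thieullen1} (and, for $\alpha=0$, to Pesin~\cite{pesin77}), and the paper simply cites it before moving on to its own finite-time analogues in Section~3. There is therefore no ``paper's own proof'' against which to compare your proposal.

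That said, your outline is the standard route to results of this type and is essentially correct as a sketch. The reduction of the case $\alpha \geq -\lambda_n(x)$ to the general formula via $\sum_i \lambda_i(x)=0$ is a clean observation. The core of the argument---replacing $B^\alpha(x,N,\eps)$ by its linearised version in a Lyapunov chart, decomposing along the Oseledets spaces, and reading off the volume as a product of one-dimensional factors $\eps\,e^{-N(\lambda_i+\alpha)^+}$ up to tempered distortion---is exactly what Thieullen does in~\cite{thieullen1}, and what the paper itself mimics in finite time in Theorem~\ref{thm2} (where the intersection of ellipsoids plays the role of your $\widetilde B^\alpha$). The one place where your sketch is genuinely compressed is the two-sided volume estimate: the upper bound on $\mu(B^\alpha)$ (lower bound on entropy) follows relatively easily from the linearised picture, but the matching lower bound on $\mu(B^\alpha)$ requires controlling the nonlinear remainder along the entire orbit segment $\{x, Tx, \ldots, T^N x\}$ and is where the Pesin-block machinery and the $C^2$ hypothesis are doing real work. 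You flag this correctly as the main obstacle, but a full proof would need the explicit tempered estimates rather than an appeal to ``Pesin's $C^2$ theory'' as a black box.
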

In Section 2 we introduce a new concept of \emph{finite-time metric entropy (FTME)} which is motivated by the local quantities \eqref{local.entropy}. The notion of FTME is defined in Definition \ref{defFTME} w.r.t.\ the Lebesgue measure in the setting of \emph{nonautonomous dynamical systems (NDS)} on fibre bundles $X \subseteq \R^n \times J$ over compact subsets $J$ of $\R$ and can be easily adapted to NDS on Riemannian manifolds with measures which are equivalent to the Riemannian measure. Our concept of FTME is related to, but formally different from the probabilistic concept of finite-time entropy (FTE) introduced by Froyland and Padberg-Gehle \cite{froyland} which is based on the concept of differential entropy for a smoothed transfer operator (see Remark \ref{remFTME}(e) for a comparison). FTME of a nonlinear NDS can be expressed by FTME of its linearization (Theorem \ref{FTEcompute}) and is proportional to the measure of the intersection of ellipsoids which are the preimages of balls under the linearized NDS (Corollary \ref{thm-ellipsoid}).

In Section 3 we prove a finite-time version of Pesin's formula from Theorem \ref{alphaentropy} which relates the FTME to the sum of finite-time Lyapunov exponents which are not less than the weight factor $\alpha$. For one and two-dimensional NDS an exact formula is given in \eqref{entropy1d} and Proposition \ref{Pesin2D}. The main approximation result which holds in arbitrary dimensions is contained in Theorem \ref{thm2}.

In Section 4 we introduce Lagrangian coherent structures (LCS) based on the new notion of FTME. For a discussion of LCS based on finite-time Lyapunov exponents see e.g.\ \cite{haller1, haller1b} and the references therein. In order to formulate Theorem 2 in \cite{haller1b} for two-dimensional differential equations (see also \cite{haller1, haller1b} for arbitrary dimensions), consider a planar differential equation $\dot x = f(t,x)$, $t \in [t_0,t_0+T]$, $x \in \R^2$, for some $T > 0$ with solution $\phi(t,s)x_0$ which takes the initial value $x_0$ at $t = s$. Let $\Lambda_1(t_0,x_0,T) \geq \Lambda_2(t_0,x_0,T)$ and $\xi_1(t_0,x_0,T), \xi_2(t_0,x_0,T)$ denote the singular values and singular vectors of $\Phi_{x_0}(t_0+T,t_0) := D\phi(t_0+T,t_0)x_0$, respectively, i.e.\ $\Phi_{x_0}(t_0+T,t_0)^\top \Phi_{x_0}(t_0+T,t_0)  \xi_i(t_0,x_0,T) = \Lambda_i(t_0,x_0,T) \xi_i(t_0,x_0,T)$. The \emph{finite-time Lyapunov exponents (FTLE)} are defined by $\lambda_i(t_0,x_0,T) := \frac{1}{T} \log \Lambda_i(t_0,x_0,T)$. Note that $\lambda_1(t_0,x_0,T) \geq  \lambda_2(t_0,x_0,T)$ (in contrast to the reversed order in \cite{haller1b}).
Consider a smooth compact curve $\cM(t) \subset \R^2$ at time $t_0$ which is mapped by the solution map into a time-evolving curve $\cM(t) = \phi(t,t_0) \cM(t_0)$. For each $x_0 \in \cM(t_0)$ denote the tangent space of $\cM(t_0)$ at $x_0$ by $T_{x_0} \cM(t_0)$.

\begin{theorem}[LCS and Weak LCS in Two Dimensions {\cite[Thm.\ 2]{haller1b}}]\label{thmFTLE}\hfill\\
(i) $\cM(t)$ is a repelling weak LCS (WLCS) over $[t_0,t_0+T]$ if and only for all $x_0 \in \cM(t_0)$:
\vspace*{-1ex}
\begin{itemize}
  \item[1.] $\Lambda_2(t_0,x_0,T) \neq \Lambda_1(t_0,x_0,T) > 1$
  \item[2.] $\xi_1(t_0,x_0,T) \perp T_{x_0} \cM(t_0)$
  \item[3.] $\langle \nabla \Lambda_1(t_0,x_0,T), \xi_1(t_0,x_0,T) \rangle = 0$
\end{itemize}

(ii) $\cM(t)$ is a repelling LCS over $[t_0,t_0+T]$ if and only if:
\vspace*{-1ex}
\begin{itemize}
  \item[1.] $\cM(t)$ is a repelling WLCS over $[t_0,t_0+T]$
  \item[2.] $\langle \xi_1(t_0,x_0,T), \nabla^2 \Lambda_1(t_0,x_0,T) \xi_1(t_0,x_0,T) \rangle < 0$
\end{itemize}
\end{theorem}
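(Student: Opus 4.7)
The plan is to recast ``(weak) repelling LCS'' as a local extremization problem for the \emph{normal repulsion rate}
\[
  \rho(t_0,x_0,T) := \bigl\langle n_{t_0+T},\, \Phi_{x_0}(t_0+T,t_0)\, n_{t_0}\bigr\rangle,
\]
where $n_t$ denotes a unit normal of $\cM(t)$ at the base point. A WLCS will be characterized as a material curve along which $\rho$ is pointwise first-order maximal under perturbations of the curve transverse to itself, while an LCS demands that this maximum be strict. Throughout, fix $x_0 \in \cM(t_0)$, abbreviate $\Phi := \Phi_{x_0}(t_0+T,t_0)$, write $C := \Phi^\top \Phi = \Lambda_1 \xi_1\xi_1^\top + \Lambda_2 \xi_2\xi_2^\top$, and let $\theta$ be the angle between the unit tangent $e_\parallel$ of $\cM(t_0)$ and $\xi_2$.

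First I would compute $\rho$ explicitly. In two dimensions $\rho^2 = (\det \Phi)^2 / |\Phi e_\parallel|^2$ by the wedge-product identity $\det\Phi = \Phi e_\parallel \wedge \Phi e_\perp$, so the spectral decomposition of $C$ yields
\[
  \rho^2 \;=\; \frac{\det C}{e_\parallel^\top C\, e_\parallel} \;=\; \frac{\Lambda_1 \Lambda_2}{\Lambda_1 \sin^2\theta + \Lambda_2 \cos^2\theta}.
\]
Repulsion means $\rho > 1$; and when $\Lambda_1 > \Lambda_2$, the right-hand side is strictly maximized in $\theta$ at $\theta = 0$, producing $\rho = \sqrt{\Lambda_1}$. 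This establishes the necessity of (i.1) (both $\Lambda_1 > 1$ and $\Lambda_1 \neq \Lambda_2$, the latter to ensure $\xi_1$ is smoothly defined and the $\theta$-maximum is strict) and (i.2) ($\xi_1 \perp T_{x_0}\cM(t_0)$, equivalently $\theta = 0$).

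Next I would vary the curve transversely: perturb $x_0$ along $\xi_1$ and demand that the perturbed curve still maximize $\rho$ pointwise. Along a WLCS the value $\rho = \sqrt{\Lambda_1}$ must be stationary under such variations, yielding $\langle \nabla \Lambda_1, \xi_1\rangle = 0$, which is (i.3); conversely, (i.1)--(i.3) together characterize $\cM(t_0)$ as an integral curve of the $\xi_2$-field through critical points of $\Lambda_1$ in the $\xi_1$ direction, i.e.\ a WLCS. For part (ii), promoting the critical point to a strict local maximum forces $\langle \xi_1, \nabla^2 \Lambda_1\, \xi_1\rangle < 0$, which is (ii.2).

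The main obstacle I anticipate is the rigorous justification of the orthogonality step, namely showing that any material curve not orthogonal to $\xi_1$ fails to be a WLCS: one must construct, near any such candidate, a competitor curve on which $\rho$ strictly exceeds $\sqrt{\Lambda_1}$ at some nearby point, and carefully handle the non-uniqueness of the normal field $n_t$, the sign conventions in the second variation, and the orientation singularities of the eigenframe $(\xi_1,\xi_2)$ at points where $\Lambda_1 = \Lambda_2$. Once these issues are addressed, both the first- and second-order conditions reduce to one-dimensional calculus applied to the scalar field $\Lambda_1$ along integral curves of $\xi_2$.
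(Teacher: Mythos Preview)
This theorem is not proved in the paper. It is quoted verbatim in the Introduction as a result from the literature, namely \cite[Thm.~2]{haller1b} (Farazmand--Haller), and serves purely as background for the paper's own development of FTME-based coherent structures. The paper offers no proof, proof sketch, or even heuristic justification of the statement; immediately after stating it, the authors move on to contrast Haller's normal-direction viewpoint with their own vector-field-direction stretching rate. Consequently there is no ``paper's own proof'' against which to assess your proposal.

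For what it is worth, your outline is a faithful reconstruction of the variational argument behind the original Haller result: the identity $\rho^2 = \det C / (e_\parallel^\top C\, e_\parallel)$, the maximization over tangent orientation yielding $e_\parallel \parallel \xi_2$, and the first- and second-order conditions on $\Lambda_1$ along $\xi_1$ are exactly the ingredients of the proof in \cite{haller1,haller1b}. The obstacles you flag (eigenframe degeneracies at $\Lambda_1=\Lambda_2$, sign conventions) are the genuine technical points there. But none of this belongs to the present paper, which simply imports the theorem as a black box.
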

In constrast to emphasizing the normal direction of $\cM$ in condition 2 of Theorem \ref{thmFTLE}(i), we introduce a stretching rate along the direction of the vector field in Section 4 and use this as a (local in time and space) weight factor for normalizing the exponential growth rates. This weight factor leads to a loss of frame-independence (cp.\ Remark \ref{rem:LCS}), but is chosen adequately so that we can show in Theorem \ref{autonomentropy} and explicitely for a family of nonlinear autonomous equations in Example \ref{exlinear} and even for linear systems in Example \ref{parabola}, that the ridge and trough-like structures of this weighted FTME field are able to recover stable and unstable manifolds. See also \cite{DoanEtAl2011, DucSiegmund2011} for alternative approaches to finite-time spectrum and hyperbolicity.


\section{Finite-time entropy}

Let $J \subseteq \R$ and $\big(X(t)\big)_{t \in J} \subseteq \R^n$ be a family of subsets of $\R^n$ indexed by $J$. Then $X := \{(t,x) \in J \times \R^n : x \in X(t)\}$ is a (trivial) fibre bundle over the base space $J$.

A continuous map $\phi : J \times X \rightarrow X$ is called
a \emph{nonautonomous dynamical system (NDS) on $X$ over $J$}, if for $t,u,s \in J$ and $x \in X(s)$ the properties $\phi(s,s,x) = (s,x)$ and $\phi(t,u,\phi(u,s,x)) = (t,\phi(t,s,x))$ hold. For ease of notation we identify $\phi$ with the two-parameter family of maps $\phi(t,s) = \phi(t,s,\cdot) : X(s) \rightarrow X(t) \subseteq \R^n$, $t,s \in J$, and the defining properties read as
\[
  \phi(s,s) x = x
  \quad \text{and} \quad
  \phi(t,u) \circ \phi(u,s) x = \phi(t,s) x
  .
\]
Obviously $\phi(t,s)^{-1} = \phi(s,t)$. If $J$ is compact, then $\phi$ is called \emph{finite-time nonautonomous dynamical system (FTNDS)}. If for all $t,s\in J$ the maps $\phi(t,s): X(t)\to X(s)$ are $C^k$ and all derivatives depend continuously on $t,s\in J$, we say that $\phi$ is $C^k$. We write $|J| := \max{J} - \min{J}$.

Note that the term \emph{nonautonomous dynamical system (NDS)} is sometimes used in slightly different contexts (see e.g.\ \cite{BergerSiegmund2003} and the references therein), either refering to a cocycle (with time $t$ measuring the time which elapsed since the starting time) or a process (with time $t$ measuring absolute time).

\begin{example}\label{example-NDS}
(a) A homeomorphism $T : M \rightarrow M$ on $M \subseteq \R^n$ generates an NDS $\phi(t,s)x := T^{t-s} x$ on $\Z \times M$ over $\Z$.

(b) Let $D \subseteq \R \times \R^n$ be open and $f \in C^k(D,\R^n)$ for some $k \in \N$. For $(t_0,x_0) \in D$ let $\phi(\cdot, t_0,x_0)$ denote the solution of the initial value problem
\[
  \dot x = f(t,x)
  , \quad
  x(t_0)=x_0
  .
\]
If for an arbitrary $J \subseteq \R$ and a family $\big(X(t)\big)_{t \in J}$ of subsets of $\R^n$ each map $\phi(t,s,\cdot) : X(s) \rightarrow X(t)$, for $t, s \in J$, is well-defined, then $\phi$ is an NDS on $X$ over $J$ and is $C^k$.

(c) In the setting of (b), let $\Phi_{x_0}(t,s)$ denote the solution matrix of the linearization $\dot v = D_x f(t,\phi(t,s,x_0)) v$ which satisfies $\Phi_{x_0}(s,s) = I_{n \times n}$ for $(s,x_0) \in X$, $t \in J$. Then $D \phi(t,s) x_0 = \Phi_{x_0}(t,s)$ and $\Phi_{x_0}$ is a linear NDS on $X$ over $J$.
\end{example}

Let $\|\cdot\|$ denote the Euclidean norm on $\R^n$. For a finite-time NDS $\phi$ on $X$ over a compact $J$ we want to measure the distance of orbits ${\cal O}(t_0,x) := \big\{\big(t,\phi(t,t_0)x\big) : t \in J \big\} \subset X$ to other orbits ${\cal O}(t_0,y)$ and thereto introduce a parametrized family $d^\alpha : X \times_{J} X \rightarrow J \times \R_0^+$, $(t_0,x,y) \mapsto (t_0,d_{t_0}^\alpha(x,y))$, of fibre metrics on the fibre product $X \times_{J} X := \big( (X(t) \times X(t)\big)_{t \in J}$, by defining for $\alpha \in \R$ the \emph{weighted orbit metric}
\begin{equation}\label{weightedmetric}
  d_{t_0}^\alpha(x,y)
  :=
  \sup_{t\in J} \|\varphi(t,t_0)y - \varphi(t,t_0)x\| e^{-\alpha(t-t_0)}
  .
\end{equation}
The dependency of $d_{t_0}^\alpha = d_{t_0}^{\phi,J,\alpha}$ on $\phi$ and $J$ is sometimes denoted in the superscript.
Using the fact that $(t,x) \in {\cal O}(t_0,x_0) \Leftrightarrow x = \phi(t,t_0)x_0$, it is easy to see that
\[
  d_{t_0}^\alpha(x_0,y_0) \leq \eps
  \quad \Leftrightarrow \quad
  \forall (t,x) \in {\cal O}(t_0,x_0), (t,y) \in {\cal O}(t_0,y_0) : \|x - y\| \leq \eps e^{-\alpha t_0} \cdot e^{\alpha t}
  .
\]
The balls w.r.t.\ the orbit metric are denoted by
\[
  B_{t_0}^{\alpha}(x_0,\eps)
  :=
  \{x \in X(t_0) : d_{t_0}^{\alpha}(x,x_0) \leq \eps\}
  \qquad \text{for } (t_0,x_0) \in X, \eps \geq 0
  .
\]
Since $d_{t_0}^{\alpha}(x,x_0) \leq \|x - x_0\|$, obviously $B_{t_0}^{\alpha}(x_0,\eps) \subseteq B(x_0,\eps) := \{x \in \R^n : \|x - x_0\| \leq \eps\}$.

\begin{proposition}[Properties of orbit metric]\label{thm-orbit-metric}
Let $\phi$ be an FTNDS on $X$ over $J$.

(i) \emph{Invariance:}  For $(t_0,x_0) \in X, \eps \geq 0$ and arbitrary $t \in J$
\begin{equation}\label{property01}
  \phi(t,t_0) B_{t_0}^{\alpha}(x_0,\eps)
  =
  B_{t}^{\alpha}\big(\varphi(t,t_0)x_0,\eps e^{\alpha(t-t_0)}\big).
\end{equation}

(ii) \emph{Monotonicity:} For $(t_0,x_0) \in X$, $0 \leq \eps_1 \leq \eps_2$ and $t_0 \in J_2 \subseteq J_1 \subseteq J$
\begin{equation}\label{property02}
  B_{t_0}^{J_1,\alpha}(x_0,\eps_1)
  \subseteq
  B_{t_0}^{J_2,\alpha}(x_0,\eps_2)
  .
\end{equation}
\end{proposition}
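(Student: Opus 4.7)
The plan is to first establish a single transformation identity for the weighted orbit metric, after which both assertions of the proposition fall out. Concretely, for any $t_0, t \in J$ and $x,y \in X(t_0)$ I would prove
\[
  d_t^\alpha\bigl(\phi(t,t_0)x,\, \phi(t,t_0)y\bigr) = e^{\alpha(t-t_0)}\, d_{t_0}^\alpha(x,y).
\]
To derive it, I would unfold the definition \eqref{weightedmetric}, rewrite the inner distance $\|\phi(s,t)\phi(t,t_0)y - \phi(s,t)\phi(t,t_0)x\|$ as $\|\phi(s,t_0)y - \phi(s,t_0)x\|$ using the cocycle property $\phi(s,t)\circ\phi(t,t_0) = \phi(s,t_0)$, and then factor the exponential weight as $e^{-\alpha(s-t)} = e^{\alpha(t-t_0)}\cdot e^{-\alpha(s-t_0)}$. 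Pulling the $s$-independent factor $e^{\alpha(t-t_0)}$ out of the supremum over $s \in J$ yields the claimed scaling.

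For part (i), the identity shows that $y \in B_{t_0}^\alpha(x_0,\eps)$, i.e.\ $d_{t_0}^\alpha(x_0,y) \leq \eps$, is equivalent to $d_t^\alpha\bigl(\phi(t,t_0)x_0,\phi(t,t_0)y\bigr) \leq \eps\, e^{\alpha(t-t_0)}$, i.e.\ $\phi(t,t_0)y \in B_t^\alpha\bigl(\phi(t,t_0)x_0,\, \eps\, e^{\alpha(t-t_0)}\bigr)$. Since $\phi(t,t_0): X(t_0) \to X(t)$ is a bijection with inverse $\phi(t_0,t)$, this pointwise equivalence upgrades to the announced equality of sets \eqref{property01}.

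For part (ii) no dynamical argument is needed: restricting a supremum to a subset of its index set never increases its value, so $J_2 \subseteq J_1$ gives $d_{t_0}^{J_2,\alpha}(x_0,y) \leq d_{t_0}^{J_1,\alpha}(x_0,y)$ for every $y \in X(t_0)$, and combined with $\eps_1 \leq \eps_2$ this yields \eqref{property02}.

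I do not anticipate a substantive obstacle; both statements reduce to the cocycle property and the monotonicity of suprema. The only minor point requiring attention is that the supremum in \eqref{weightedmetric} runs over \emph{all} $s \in J$ rather than only $s \geq t_0$, which is what makes the scaling identity in part (i) an exact equality (valid for any $t \in J$, including times before $t_0$) and not merely a one-sided inequality.
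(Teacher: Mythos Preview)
Your argument is correct. Part (ii) is handled identically to the paper. For part (i) you take a slightly different route: you establish the metric scaling identity $d_t^\alpha(\phi(t,t_0)x,\phi(t,t_0)y) = e^{\alpha(t-t_0)} d_{t_0}^\alpha(x,y)$ and read off the equality of balls directly, whereas the paper first rewrites $B_{t_0}^\alpha(x_0,\eps)$ as the intersection $\bigcap_{s\in J}\phi(s,t_0)^{-1}B(\phi(s,t_0)x_0,\eps e^{\alpha(s-t_0)})$ and then pushes this intersection forward by $\phi(t,t_0)$. Both rely on the same cocycle identity and bijectivity of $\phi(t,t_0)$; your version is a bit more direct, while the paper's intersection representation \eqref{stableset} has the incidental benefit of being reused later (e.g.\ in Remark~\ref{remFTME}(d), Proposition~\ref{FTEcompute-linear}, and Corollary~\ref{thm-ellipsoid}).
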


\begin{proof}
(i) We rewrite $B_{t_0}^{\alpha}(x_0,\eps)$ in the following form
\begin{eqnarray}\label{stableset}
  B_{t_0}^{\alpha}(x_0,\eps)
  &=&
  \big\{x\in X(t_0) : \|\phi(t,t_0)x - \phi(t,t_0)x_0\| \leq \eps e^{\alpha(t-t_0)} \;\forall t\in J\big\}
\nonumber\\
  &=&
  \big\{x\in X(t_0) : \phi(t,t_0)x \in B\big(\phi(t,t_0)x_0, \eps e^{\alpha(t-t_0)}\big) \;\forall t\in J\big\}
\nonumber\\
  &=&
  \big\{x\in X(t_0) : x \in \phi(t,t_0)^{-1} B\big(\phi(t,t_0)x_0, \eps e^{\alpha(t-t_0)}\big) \;\forall t\in J\big\}
\nonumber\\
  &=&
  \bigcap_{t \in J} \phi(t,t_0)^{-1} B\big(\phi(t,t_0)x_0, \eps e^{\alpha(t-t_0)}\big)
  .
\end{eqnarray}
To derive \eqref{property01}, we observe that for $t \in J$
\begin{eqnarray*}
  \phi(t,t_0) B_{t_0}^{\alpha}(x_0,\eps)
  &=&
  \phi(t,t_0)  \bigcap_{s \in J} \phi(s,t_0)^{-1} B\big(\phi(s,t_0)x_0, \eps e^{\alpha(s-t_0)}\big)
\\
  &=&
  \bigcap_{s \in J} \phi(t,t_0) \phi(s,t_0)^{-1} B\big(\phi(s,t_0)x_0, \eps e^{\alpha(s-t_0)}\big)
\\
  &=&
  \bigcap_{s \in J} \phi(s,t)^{-1} B\big(\phi(s,t) \phi(t,t_0)x_0, \eps e^{\alpha(t-t_0)} e^{\alpha(s-t)}\big)
\\
  &=&
  B_{t}^{\alpha}(\phi(t,t_0)x_0, \eps e^{\alpha(t-t_0)})
  .
\end{eqnarray*}

(ii) If $\eps_1 \leq \eps_2$ and $J_2 \subseteq J_1$ then $d_{t_0}^{J_2,\alpha}(x,y) \leq  d_{t_0}^{J_1,\alpha}(x,y)$ for $x,y \in X(t_0)$ and the claim follows.
\end{proof}


\begin{definition}[Finite-time metric entropy (FTME)]\label{defFTME}
Let $\phi$ be an FTNDS on $X$ over $J$ and $\alpha \in \R$. The \emph{finite-time metric entropy (FTME)} with weight $\alpha$ at $(t_0,x_0) \in X$ is defined by
\begin{equation}\label{FTentropy}
  h_{t_0}^{\alpha}(x_0)
  :=
  \limsup_{\eps \to 0} h_{t_0}^{\alpha}(x_0,\eps)
  \qquad \text{with }
  h_{t_0}^{\alpha}(x_0,\eps)
  :=
  -\frac{1}{|J|} \log\frac{\mu\big(B_{t_0}^{\alpha}(x_0,\eps)\big)}{\mu\big(B(x_0,\eps)\big)}
  .
\end{equation}
The dependency of $h_{t_0}^{\alpha} = h_{t_0}^{\phi,J,\alpha}$ on $\phi$ and $J$ is sometimes denoted in the superscript.
\end{definition}


\begin{remark}[Finite-time escape rate]\label{remFTME}
(a) Definition \ref{defFTME} can be seen as a finite-time version of the local $\alpha$-entropy introduced by Thieullen \cite{thieullen1} to FTNDS which are not necessarily measure-preserving. However, in contrast to \cite{thieullen1}, we will study FTME for weight factors $\alpha$ which might depend on $x_0$ and are not just a constant. We will exploit this idea in Section 4 in Theorem \ref{autonomentropy} to construct new candidates of Lagrangian coherent structures. 

(b)
The quantity $h_{t_0}^{\alpha}(x_0,\eps)$ in \eqref{FTentropy} is called \emph{finite-time $\alpha$-escape rate} of radius $\eps > 0$ at $(t_0,x_0)$. It measures how many points escape from the $\eps$-orbit neighborhood of the orbit ${\cal O}(t_0,x_0)$ on $J$, since with the first-order approximation $\log x \approx x - 1$ for $x \approx 1$, and using the fact that $B_{t_0}^{\alpha}(x_0,\eps) \subset B(x_0,\eps)$, we have
\[
  h_{t_0}^{\alpha}(x_0,\eps)
  \approx
  \frac{1}{|J|}
  \bigg[
  1 - \frac{\mu\big(B_{t_0}^{\alpha}(x_0,\eps)\big)}{\mu\big(B(x_0,\eps)\big)}
  \bigg]
  =
  \frac{1}{|J|}
  \frac{\mu\big(B(x_0,\eps) \setminus B_{t_0}^{\alpha}(x_0,\eps)\big)}{\mu\big(B(x_0,\eps)\big)}
\]
if $\frac{\mu(B_{t_0}^{\alpha}(x_0,\eps))}{\mu(B(x_0,\eps))} \approx 1$.

(c) Let $\phi$ be an NDS on $\Z \times M$ over $\Z$ generated by a homeomorphism $T : M \rightarrow M$ as in Example \ref{example-NDS}(a). In order to relate the metric entropy $h_\mu(T,x)$ of $T$ at $x$ to the FTME, more precisely, to the finite-time escape rate, define the sets $J_n := \{0, 1, \dots, n\}$ for $n \in \N$. Using the fact that $|J_n| = n$ and $B(x,n,\eps)$ in \eqref{autonomousstableset} equals $B_{t_0}^{J_n,\alpha}(x,\eps)$ for $t_0=0$ and $\alpha=0$, we get $\limsup_{n \to \infty} h_0^{J_n,0}(x,\eps) = \limsup_{n \to \infty} - \frac{1}{n} (\log B_0^{J_n,0}(x,\eps) - \log B(x,\eps))$ and hence
\[
  h_\mu(T,x)
  =
  \limsup_{\eps \to 0}
  \limsup_{n \to \infty} h_0^{J_n,0}(x,\eps)
  .
\]

(d) If $\phi$ is an NDS on $X$ over a two-point set $J = \{t_0, t_0 + T\}$ for some $t_0 \in \R$ and $T > 0$, then  \eqref{stableset} for $\alpha = 0$ implies $B_{t_0}^{0}(x_0,\eps) = B(x_0,\eps) \cap \phi(t_0 + T,t_0)^{-1} B(\phi(t_0+T,t_0)x_0, \eps)$, and with \eqref{FTentropy} we get for the finite-time escape rate $h_{t_0}^{0}(x_0,\eps)$ the relation
\begin{equation}\label{coherentset}
  \frac{\mu\big(\varphi(t_0 +T,t_0)^{-1}B(\varphi(t_0 +T,t_0)x_0,\eps) \cap B(x_0,\eps)\big)}
  {\mu(B(x_0,\eps))}
  =
  e^{- h_{t_0}^{0}(x_0,\eps) T}
  .
\end{equation}
If $h_{t_0}^{0}(x_0,\eps) \approx 0$ then the pair of sets $A_{t_0} := B(x_0,\eps)$, $A_{t_0+T} := B(\varphi(t_0 +T,t_0)x_0,\eps)$, satisfies $A_{t_0} \approx \varphi(t_0 +T,t_0)^{-1} A_{t_0 + T}$ and is called \emph{pair of coherent sets} in \cite{froyland5, froyland2}. In other words, the FTME $h_{t_0}^{0}(x_0)$ over a two-point set $\{t_0, t_0 + T\}$ is an average logarithmic measure of coherence of infinitesimally small balls centered at $x_0$ and $\varphi(t_0 +T,t_0)x_0$.

(e) \emph{Finite-time metric entropy (FTME)} in Definition \ref{defFTME} and \emph{finite-time entropy (FTE)} \cite[Definition 4.1]{froyland} can be expressed in terms of \emph{differential entropy} which is defined by $h_{\operatorname{diff}}(f) = - \int_{\R^n} f(x) \log f(x) \,d\mu(x)$ for $f \in L^1(\R^n)$ and goes back to Boltzmann (see \cite[Chapter 9]{lasota} for a discussion in the dynamical systems context).
FTE for an NDS $\phi$ on $X$ over a two-point time set $J = \{t_0, t_0+T\}$ satisfies
\begin{equation*}
  FTE(x_0,t_0,T)  =
  \lim_{\eps \to  0} \frac{1}{|T|} \big[ h_{\operatorname{diff}}(\mathcal{A}_\eps \mathcal{P}_{t_0,T} f_{B(x_0,\eps)})
  - h_{\operatorname{diff}}(f_{B(x_0,\eps)}) \big]
  \quad \text{with }
  f_A := \frac{1}{\mu(A)} \mathds{1}_A(\cdot)
\end{equation*}
and compares the differential entropy of a scaled characteristic function on an $\eps$-ball with a push-forward of that function by the Perron-Frobenius operator $\mathcal{P}_{t_0,T} f(x) = \frac{f(\varphi(t_0+T,t_0)^{-1}x)}{|\det D_x \phi(t_0+T,t_0) \varphi(t_0+T,t_0)^{-1}x|}$ followed by an $\eps$-smoothing $\mathcal{A}_{\eps} f(x) = \frac{1}{\mu(B(x,\eps))} \int_{B(x,\eps)} f \,d\mu$.

FTME for an NDS $\phi$ on $X$ over a compact time set $J \subset \R$ for some $t_0 \in J$ and weight factor $\alpha \in \R$ is
\begin{equation*}
  h_{t_0}^{\alpha}(x_0)
  = \lim \limits_{\eps \to 0} \frac{1}{|J|}
  \big[ h_{\operatorname{diff}}(f_{B_{t_0}^{\alpha}(x_0,\eps)}) - h_{\operatorname{diff}}(f_{B(x_0,\eps)}) \big]
  .
\end{equation*}
The comparison of FTME and FTE will be the subject of further studies. To illustrate one possible relation between FTE and FTME, let $\phi$ be an NDS on $X$ over a two-point time set $J = \{t_0, t_0+T\}$. Assume for simplicity that $X(t_0) = X(t_0 + T)$ and let ${\cal B} = \{B_1, \dots, B_n\}$ be a partition of the state space $X(t_0) = X(t_0+T)$. Then formula \eqref{coherentset} suggests that the FTME $h_{t_0}^{0}(x_0)$ could be approximated by
\[
  - \frac{1}{T} \log \frac{\mu(\phi(t_0 + T,t_0)^{-1} B_j \cap B_i)}{\mu(B_i)}
\]
where $x_0 \in B_i$ and $\phi(t_0+T,t_0)x_0 \in B_j$ for some $i,j$. On the other hand, $FTE(x_0,t_0,T)$, with $x_0 \in B_i$ for some $i$, is approximated by a localized version of the Kolmogorov-Shannon entropy
\[
  -\frac{1}{T} \sum_{j=1}^n \frac{\mu(B_i \cap \phi(t_0 + T,t_0)^{-1}B_j)}{\mu(B_i)}
  \log \frac{\mu(B_i \cap \phi(t_0 + T,t_0)^{-1}B_j)}{\mu(B_i)}
\]
of the partition $\cal B$.
\end{remark}


The following proposition states that FTME is constant for linear nonautonomous dynamical systems. A similar statement for FTE can be found in \cite[Lemma 2.6]{froyland}. Note, however, that the FTME with an exponential weight factor $\alpha = \alpha(x_0)$ which depends on $x_0 \in X(t_0)$ for some $t_0 \in J$, in general is not constant even for linear systems. Indeed the weighted FTME field is able to detect stable and unstable manifolds (see Example \ref{exlinear}).

\begin{proposition}[Linearity implies constant FTME]\label{FTEcompute-linear}
Let $\phi$ be an FTNDS on $X$ over $J$ and $\alpha \in \R$. Assume that $\phi(t,s) : X(s) \to X(t)$ is linear for all $t,s \in J$. Then $h_{t_0}^{\alpha}(x_0)$ is independent of $x_0$ and is denoted by $h_{t_0}^{\alpha}$ which satisfies
\begin{equation}\label{entropylinear}
h^\alpha_{t_0} = -\frac{1}{|J|} \log \frac{\mu(B^\alpha_{t_0}(0,1))}{\mu(B(0,1))}.
\end{equation}
\end{proposition}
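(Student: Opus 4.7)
The plan is to exploit linearity to show that the orbit-metric ball $B^\alpha_{t_0}(x_0,\eps)$ is simply a translate of an $\eps$-rescaling of the unit orbit-metric ball at the origin, after which the formula follows from the translation-invariance and homogeneity of Lebesgue measure.

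First I would unpack the definition \eqref{weightedmetric} and use linearity of $\phi(t,t_0)$ to observe
\[
  d_{t_0}^\alpha(x,y)
  = \sup_{t\in J}\|\phi(t,t_0)(y-x)\|e^{-\alpha(t-t_0)}
  = d_{t_0}^\alpha(0,y-x),
\]
so the metric is translation-invariant in the fibre $X(t_0)$. Combined with positive homogeneity (again from linearity), $d_{t_0}^\alpha(0,\eps z) = \eps\, d_{t_0}^\alpha(0,z)$ for $\eps>0$, this gives
\[
  B^\alpha_{t_0}(x_0,\eps) = x_0 + \eps\, B^\alpha_{t_0}(0,1),
  \qquad
  B(x_0,\eps) = x_0 + \eps\, B(0,1).
\]

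Next I would invoke the translation invariance of Lebesgue measure and the scaling $\mu(\eps A) = \eps^n \mu(A)$ for $A\subseteq\R^n$ to conclude
\[
  \frac{\mu\bigl(B^\alpha_{t_0}(x_0,\eps)\bigr)}{\mu\bigl(B(x_0,\eps)\bigr)}
  =\frac{\eps^n\mu\bigl(B^\alpha_{t_0}(0,1)\bigr)}{\eps^n\mu\bigl(B(0,1)\bigr)}
  =\frac{\mu\bigl(B^\alpha_{t_0}(0,1)\bigr)}{\mu\bigl(B(0,1)\bigr)},
\]
which is independent of both $x_0$ and $\eps$. Substituting into Definition \ref{defFTME}, the function $\eps\mapsto h_{t_0}^\alpha(x_0,\eps)$ is constant, so the $\limsup$ as $\eps\to 0$ is attained trivially, yielding \eqref{entropylinear}.

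There is essentially no hard step: the whole argument is a direct computation once the change of variables from $x_0+\eps z$ to $z$ is recognised. The only mild care needed is to make sure $B^\alpha_{t_0}(0,1)$ and $B(0,1)$ have strictly positive, finite Lebesgue measure so that the logarithm is defined; since $B^\alpha_{t_0}(0,1)\subseteq B(0,1)$ and $B^\alpha_{t_0}(0,1)$ contains the origin together with a neighbourhood (by continuity of the finitely many — or, via compactness of $J$, uniformly bounded — maps $\phi(t,t_0)$), both inclusions are nontrivial and bounded, so the ratio lies in $(0,1]$.
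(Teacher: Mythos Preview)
Your proof is correct and follows essentially the same approach as the paper: both arguments use linearity of $\phi(t,t_0)$ to obtain $B^\alpha_{t_0}(x_0,\eps)=x_0+\eps\,B^\alpha_{t_0}(0,1)$ and then invoke translation invariance and $n$-homogeneity of Lebesgue measure. The only cosmetic difference is that the paper derives the scaling via the intersection representation \eqref{stableset}, whereas you derive it from homogeneity of the metric $d_{t_0}^\alpha$; your additional remark on positivity and finiteness of $\mu(B^\alpha_{t_0}(0,1))$ is a welcome clarification that the paper omits.
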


\begin{proof}
Since $\phi$ is linear
\begin{eqnarray*}
  B_{t_0}^{\alpha}(x_0,\eps)
  &=&
  \big\{x \in X(t_0) : \sup_{t \in J} \|\phi(t,t_0)(x-x_0)\| e^{-\alpha (t-t_0)} \leq \eps\big\}
\\
  &=&
  \big\{x \in X(t_0) : x-x_0 \in B_{t_0}^{\alpha}(0,\eps)\big\}
  =
  x_0 + B_{t_0}^{\alpha}(0,\eps)\\
  &=& x_0 + \bigcap_{t\in J} \phi(t,t_0)^{-1}B(0,\eps e^{\alpha(t-t_0)}) \\ 
  &=& x_0 + \eps \bigcap_{t\in J} \phi(t,t_0)^{-1}B(0,e^{\alpha(t-t_0)}) = x_0 + \eps B^\alpha_{t_0}(0,1).
\end{eqnarray*}
Since $\mu$ is the $n$-dimensional Lebesgue measure which is translation invariant, it follows that
\[
  \mu\big(B_{t_0}^{\alpha}(x_0,\eps)\big)
  =
  \mu\big(B_{t_0}^{\alpha}(0,\eps)) = \eps^n \mu\big(B_{t_0}^{\alpha}(0,1))
\]
and
\[
  \mu\big(B(x_0,\eps)\big) = \mu\big(B(0,\eps)\big) = \eps^n  \mu\big(B(0,1)\big)
  ,
\]
proving that the FTME is independent of $x_0$ and is given by \eqref{entropylinear}.
\end{proof}

The following theorem shows that the weighted FTME of a nonlinear nonautonomous dynamical system equals the weighted FTME of its linearization. A similar statement also holds for FTE \cite[Lemma 2.7]{froyland}.

\begin{theorem}[Linearized FTME]\label{FTEcompute}
Let $\phi$ be a $C^2$ FTNDS on $X$ over $J$ and $\alpha \in \R$.
Then the linearization $\Phi_{x_0} (t,s) := D \phi(t,s)x_0$ determines the FTME, and for $(t_0,x_0) \in X$
\begin{equation}\label{FTentropy2}
  h^{\phi, \alpha}_{t_0}(x_0) = h^{\Phi_{x_0},\alpha}_{t_0}.
\end{equation}
\end{theorem}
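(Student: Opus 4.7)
My plan is to rescale the orbit ball to unit size and then show that the rescaled set converges, in Lebesgue measure, to the unit orbit ball of the linearization as $\eps \to 0$. Concretely, set $\widetilde B^\eps := \tfrac{1}{\eps}\bigl(B_{t_0}^{\phi,\alpha}(x_0,\eps) - x_0\bigr)$. Translation invariance of $\mu$ and a change of variables give $\mu(B_{t_0}^{\phi,\alpha}(x_0,\eps)) = \eps^n \mu(\widetilde B^\eps)$ and $\mu(B(x_0,\eps)) = \eps^n \mu(B(0,1))$, so the quotient appearing in \eqref{FTentropy} equals $\mu(\widetilde B^\eps)/\mu(B(0,1))$. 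It therefore suffices to prove $\mu(\widetilde B^\eps) \to \mu(B_{t_0}^{\Phi_{x_0},\alpha}(0,1))$ as $\eps \to 0$; combined with Proposition \ref{FTEcompute-linear} applied to the linear FTNDS $\Phi_{x_0}$ (cf.\ Example \ref{example-NDS}(c)), this yields \eqref{FTentropy2}.

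To control $\widetilde B^\eps$ I would exploit the $C^2$ hypothesis and the compactness of $J$ to obtain a uniform second-order Taylor estimate
\[
  \bigl\| \phi(t,t_0)(x_0 + \eps y) - \phi(t,t_0)x_0 - \eps \Phi_{x_0}(t,t_0) y \bigr\| \leq C \eps^2
\]
valid for all $t \in J$, all small $\eps > 0$, and all $y$ with $\|y\| \leq 1$. Since $\widetilde B^\eps \subseteq B(0,1)$ by the observation preceding Proposition \ref{thm-orbit-metric}, the estimate applies at every point of $\widetilde B^\eps$. A short computation, using the elementary bound $\alpha(t-t_0) \geq -|\alpha||J|$ on $J$, then produces the sandwich
\[
  B_{t_0}^{\Phi_{x_0},\alpha}(0, 1-\kappa\eps)
  \;\subseteq\; \widetilde B^\eps \;\subseteq\;
  B_{t_0}^{\Phi_{x_0},\alpha}(0, 1+\kappa\eps)
\]
with $\kappa := C e^{|\alpha||J|}$, for all sufficiently small $\eps$.

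Homogeneity of the linear orbit ball, $\mu(B_{t_0}^{\Phi_{x_0},\alpha}(0,r)) = r^n\mu(B_{t_0}^{\Phi_{x_0},\alpha}(0,1))$ (argued as in the proof of Proposition \ref{FTEcompute-linear}), turns the sandwich into a two-sided $(1\pm\kappa\eps)^n$ estimate on $\mu(\widetilde B^\eps)$; letting $\eps \to 0$ yields the required measure convergence, and applying $-\frac{1}{|J|}\log$ finishes the proof. The main technical difficulty lies in the sandwich step: the quadratic error has to be small compared to the \emph{smallest} radius $\eps e^{\alpha(t-t_0)}$ of the preimage balls defining $B_{t_0}^{\phi,\alpha}$, uniformly over $t \in J$. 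Compactness of $J$ is precisely what bounds the worst-case blow-up factor $e^{|\alpha||J|}$; without it the linearization would fail to capture the FTME faithfully.
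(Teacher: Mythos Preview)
Your argument is correct and follows essentially the same route as the paper: both proofs use the uniform second-order Taylor bound (from $C^2$ and compactness of $J$) to sandwich the nonlinear orbit ball between linear orbit balls of radii $\eps\pm C\eps^2$, then pass to the limit using $(1\pm C\eps)^n\to 1$. Your preliminary rescaling $\widetilde B^\eps=\tfrac1\eps(B_{t_0}^{\phi,\alpha}(x_0,\eps)-x_0)$ and explicit appeal to homogeneity of the linear orbit ball are a cosmetic repackaging of the paper's inclusions $B_{t_0}^{\phi,\alpha}(x_0,\eps)\subseteq B_{t_0}^{\Phi_{x_0},\alpha}(x_0,\eps+C\eps^2)$ and $B_{t_0}^{\Phi_{x_0},\alpha}(x_0,\eps)\subseteq B_{t_0}^{\phi,\alpha}(x_0,\eps+C\eps^2)$, with the same constant $C=\tilde C\sup_{t\in J}e^{-\alpha(t-t_0)}$ (bounded by your $\kappa$).
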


\begin{proof}
Let $(t_0,x_0) \in X$. Then Taylor's formula implies for $t \in J$ and $x \in X(t_0)$
\begin{equation}\label{eq-taylor-phi}
  \phi(t,t_0)x - \phi(t,t_0)x_0 = \Phi_{x_0}(t,t_0) (x-x_0) + r(t,t_0,x-x_0)
\end{equation}
with a continuous function $r$ which satisfies $\lim_{x \to x_0} \frac{r(t,t_0,x-x_0)}{\|x - x_0\|^2} = 0$ uniformly in $t, t_0 \in J$. Choose and fix $\eps_0 > 0$. Then there exists a constant $\tilde{C} > 0$ such that for all $\eps \in [0,\eps_0]$ and $t,s \in J$, $\|z\| \leq \eps$
\begin{equation}\label{eq-taylor-remainder}
  \|r(t,s,z)\| \leq \tilde{C} \eps^2
  .
\end{equation}
We show the following two inclusions for $\eps \in [0,\eps_0]$
\[
  \mathrm{(i)}\, B_{t_0}^{\phi,\alpha}(x_0,\eps) \subseteq B_{t_0}^{\Phi_{x_0},\alpha}(x_0,\eps + C\eps^2)
  \qquad \text{and} \qquad
  \mathrm{(ii)}\, B_{t_0}^{\Phi_{x_0},\alpha}(x_0,\eps) \subseteq B_{t_0}^{\phi,\alpha}(x_0,\eps + C\eps^2)
\]
with $C := \tilde{C} \sup_{t \in J} e^{-\alpha(t-t_0)}$. To show (i), let $x \in B_{t_0}^{\phi,\alpha}(x_0,\eps)$. Then $\|\phi(t,t_0)x - \phi(t,t_0)x_0\| \leq \eps e^{\alpha(t-t_0)}$ for all $t \in J$. With \eqref{eq-taylor-phi} and \eqref{eq-taylor-remainder} we get for $t \in J$
\[
  \|\Phi_{x_0}(t,t_0)(x - x_0)\| e^{-\alpha(t-t_0)} \leq \eps + \tilde{C}\eps^2 e^{-\alpha(t-t_0)}
\]
and taking the supremum over $t \in J$ yields (i). The inclusion (ii) is proved analogously.
Applying the Lebesgue measure $\mu$ to (i), we get
\begin{equation}\label{star}
  \frac{\mu\big(B_{t_0}^{\phi,\alpha}(x_0,\eps)\big)}{\mu\big(B(x_0,\eps)\big)}
  \frac{\mu\big(B(x_0,\eps)\big)}{\mu\big(B(x_0,\eps + C\eps^2)\big)}
  \leq
  \frac{\mu\big(B_{t_0}^{\Phi_{x_0},\alpha}(x_0,\eps + C\eps^2)\big)}{\mu\big(B(x_0,\eps + C\eps^2)\big)}
  .
\end{equation}
Taking the logarithm, dividing by $|J|$, letting $\eps \to 0$ and using the fact that $\tfrac{\mu(B(x_0,\eps))}{\mu(B(x_0,\eps + C\eps^2))} = \frac{\eps^n}{(\eps + C\eps^2)^n} \to 1$, we get $h^{\phi, \alpha}_{t_0}(x_0) \leq h^{\Phi_{x_0},\alpha}_{t_0}$. Similarly (ii) implies $h^{\Phi_{x_0},\alpha}_{t_0} \leq h^{\phi, \alpha}_{t_0}(x_0)$, proving \eqref{FTentropy2}.
\end{proof}


\begin{remark}\label{rem:9}
(a) From Theorem \ref{FTEcompute} and its proof one can derive that for $C^2$ FTNDS the limsup in Definition \ref{defFTME} of FTME can be replaced by lim.

(b) If the Euclidean norm in \eqref{weightedmetric} is replaced by a norm $\|\cdot\|_\Gamma:= \|\Gamma \cdot\|$ for a positive definite matrix $\Gamma \in \R^{n\times n}$, then the finite-time metric entropy w.r.t.\ the $\|\cdot\|_\Gamma$ norm is defined by
\begin{equation}
  h_{t_0}^{\Gamma,\alpha}(x_0)
  :=
  \limsup_{\eps \to 0} h_{t_0}^{\Gamma,\alpha}(x_0,\eps)
  \qquad \text{with }
  h_{t_0}^{\Gamma,\alpha}(x_0,\eps)
  :=
  -\frac{1}{|J|} \log\frac{\mu\big(B_{t_0}^{\Gamma,\alpha}(x_0,\eps)\big)}{\mu\big(B^\Gamma(x_0,\eps)\big)}
  .
\end{equation}
with $B_{t_0}^{\Gamma,\alpha}(x_0,\eps) := \{x \in X(t_0): \sup_{t\in J} \|\phi(t,t_0)y- \phi(t,t_0)x_0\|_\Gamma e^{-\alpha(t-t_0)} \leq \eps \}$ and $B^\Gamma(x_0,\eps):= \{x \in X(t_0): \|x-x_0\|_\Gamma \leq \eps \}$.
Similarly as in the proofs of Proposition \ref{FTEcompute-linear} and Theorem \ref{FTEcompute}, one can show that $h_{t_0}^{\Gamma,\alpha}(x_0)$ equals the FTME of the linearization at $x_0$, which is a constant.   
\end{remark}


To geometrically characterize FTME using ellipsoids, recall that for an invertible matrix $A \in \R^{n \times n}$ the ellipsoid
\[
  E(A) := A^{-1} B(0,1)
  =
  \big\{ A^{-1} x \in \R^n : x \in B(0,1) \big\}
  = \big\{ x \in \R^n : \langle x, A^\top A x \rangle \leq 1 \big\}
\]
is the unit ball in the new norm $\|\cdot\|_{A^\top A} = \sqrt{\langle x, A^\top A x \rangle}$ induced by the symmetric positive definite matrix $A^\top A = U \Lambda^2 U^T$ where $V \Lambda U^T = A$ is the singular value decomposition of $A$ with orthogonal matrices $U$, $V$ and diagonal matrix $\Lambda = \diag(\Lambda_1, \dots, \Lambda_n)$ with singular values $\Lambda_1 \geq \dots \geq \Lambda_n > 0$. The semi-principal axes of $E(A)$ are described by the $n$ unit vectors which form the columns of $U$ and have length $\Lambda_i^{-1}$, $i = 1, \dots, n$.


\begin{corollary}[Ellipsoid characterization of entropy]
\label{thm-ellipsoid}
Under the assumptions of Theorem \ref{FTEcompute}, for $(t_0,x_0) \in X$
\begin{equation}\label{FTElinear2}
  h_{t_0}^{\phi,\alpha}(x_0)
  =
  - \frac{1}{|J|} \log \frac{\Gamma(\frac{n}{2} + 1)}{\pi^{\frac{n}{2}}}
  \mu \Big( \bigcap_{t \in J} E\big(\Phi_{x_0}(t,t_0) e^{-\alpha(t-t_0)}\big) \Big)
  .
\end{equation}
\end{corollary}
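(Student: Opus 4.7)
The plan is to combine Theorem~\ref{FTEcompute} (nonlinear FTME equals linearized FTME) with Proposition~\ref{FTEcompute-linear} (linear FTME has the closed form $-\frac{1}{|J|}\log\frac{\mu(B^\alpha_{t_0}(0,1))}{\mu(B(0,1))}$), and then identify the unit ball $B^{\Phi_{x_0},\alpha}_{t_0}(0,1)$ of the linearization with the stated intersection of ellipsoids. So the only real content is a bookkeeping step translating the definition of $B^{\Phi_{x_0},\alpha}_{t_0}(0,1)$ into the ellipsoid language introduced just above the corollary.

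First I would invoke Theorem~\ref{FTEcompute} to replace $h^{\phi,\alpha}_{t_0}(x_0)$ by $h^{\Phi_{x_0},\alpha}_{t_0}$, and then apply Proposition~\ref{FTEcompute-linear} to the linear FTNDS $\Phi_{x_0}$ to obtain
\[
  h^{\phi,\alpha}_{t_0}(x_0) \;=\; -\frac{1}{|J|} \log \frac{\mu\big(B^{\Phi_{x_0},\alpha}_{t_0}(0,1)\big)}{\mu\big(B(0,1)\big)}.
\]
Next I would unfold the definition of the ball under the weighted orbit metric for the linear system (as already done inside the proof of Proposition~\ref{FTEcompute-linear}):
\[
  B^{\Phi_{x_0},\alpha}_{t_0}(0,1)
  = \bigcap_{t \in J} \Phi_{x_0}(t,t_0)^{-1}\, B\!\left(0,\,e^{\alpha(t-t_0)}\right)
  = \bigcap_{t \in J} \bigl(e^{-\alpha(t-t_0)}\Phi_{x_0}(t,t_0)\bigr)^{-1} B(0,1).
\]
Recognising the right-hand side as $\bigcap_{t\in J} E\bigl(e^{-\alpha(t-t_0)}\Phi_{x_0}(t,t_0)\bigr)$ by the very definition $E(A)=A^{-1}B(0,1)$ used in the excerpt, I would then substitute $\mu(B(0,1))=\pi^{n/2}/\Gamma(\tfrac{n}{2}+1)$ and combine to obtain \eqref{FTElinear2}.

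There is no genuine obstacle: invertibility of $\Phi_{x_0}(t,t_0)$ for all $t\in J$ (required for the ellipsoid notation $E(\cdot)$ to be legitimate) follows from the $C^2$ hypothesis and the fact that $\phi(t,t_0)$ is a diffeomorphism with $\Phi_{x_0}(t,t_0)^{-1}=\Phi_{\phi(t,t_0)x_0}(t_0,t)$. The only mild care needed is to verify that the intersection is actually measurable and that a finite-dimensional intersection formula is being used; both are immediate since each $E(\cdot)$ is a closed convex set and one is intersecting over a compact $J$.
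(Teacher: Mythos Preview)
Your proposal is correct and follows essentially the same route as the paper: invoke Theorem~\ref{FTEcompute} and Proposition~\ref{FTEcompute-linear} to reduce to $-\tfrac{1}{|J|}\log\frac{\mu(B^{\Phi_{x_0},\alpha}_{t_0}(0,1))}{\mu(B(0,1))}$, then identify $B^{\Phi_{x_0},\alpha}_{t_0}(0,1)$ with $\bigcap_{t\in J}E\bigl(\Phi_{x_0}(t,t_0)e^{-\alpha(t-t_0)}\bigr)$ directly from the definitions and insert the volume $\mu(B(0,1))=\pi^{n/2}/\Gamma(\tfrac{n}{2}+1)$. The paper's argument is the same, only with the order of the steps slightly permuted and without your extra remarks on invertibility and measurability.
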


\begin{proof}
Using the facts that
\begin{eqnarray*}
  B_{t_0}^{\Phi_{x_0}, \alpha}(0,1)
  & = &
  \{ x \in \R^n : \|\Phi_{x_0}(t,t_0)x\| \leq e^{\alpha(t-t_0)} \text{ for } t \in J\}
\\
  & = &
  \{ x \in \R^n : \langle \Phi_{x_0}(t,t_0) e^{-\alpha(t-t_0)}x, \Phi_{x_0}(t,t_0) e^{-\alpha(t-t_0)}x \rangle \leq 1
   \text{ for } t \in J\}
\\
  & = &
  \bigcap_{t \in J} E(\Phi_{x_0}(t,t_0) e^{-\alpha(t-t_0)})
\end{eqnarray*}
and $B(0,1) = \frac{\pi^{\frac{n}{2}}}{\alpha(\frac{n}{2} + 1)}$, we get with Theorem \ref{FTEcompute} for $(t_0,x_0) \in X$
\[
  h^{\phi, \alpha}_{t_0}(x_0)
  =
  h^{\Phi_{x_0},\alpha}_{t_0}
  =
  - \frac{1}{|J|} \log \frac{\mu\big(B_{t_0}^{\Phi_{x_0}, \alpha}(0,1)\big)}{\mu\big(B(0,1)\big)}
  ,
\]
proving \eqref{FTElinear2}.
\end{proof}





\begin{theorem}\label{thm1}
Under the assumptions of Theorem \ref{FTEcompute} the following holds.
\begin{itemize}
  \item[(i)] \emph{Upper and lower bound on FTME:} For $(t_0,x_0) \in X$
    \begin{equation}\label{prop2}
      \frac{n}{|J|} \log \Big(\sup_{t\in J} e^{\alpha(t-t_0)}\|\Phi_{x_0}(t,t_0)^{-1}\|\Big)
      \leq
      h^{\phi,\alpha}_{t_0}(x_0)
      \leq
      \frac{n}{|J|} \log \Big(\inf_{t\in J}\frac{e^{\alpha(t-t_0)}}{\|\Phi_{x_0}(t,t_0)\|}\Big)
  \end{equation}

  \item[(ii)] \emph{FTME along trajectories:} For $t \in J$ and $(t_0,x_0) \in X$
    \begin{equation}
       h^{\phi,\alpha}_{t}(\varphi(t,t_0)x_0)
      =
      h^{\phi,\alpha}_{t_0}(x_0)
      + \frac{n \alpha(t-t_0)}{|J|} - \frac{\log |\det \Phi_{x_0}(t,t_0)|}{|J|} \label{eq02}
    \end{equation}
\end{itemize}
\end{theorem}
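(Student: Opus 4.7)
The natural plan is to reduce both parts to the linearization via Theorem \ref{FTEcompute}, giving $h^{\phi,\alpha}_{t_0}(x_0) = h^{\Phi_{x_0},\alpha}_{t_0}$, and then to invoke Proposition \ref{FTEcompute-linear} to express $h^{\Phi_{x_0},\alpha}_{t_0}$ purely in terms of the single volume ratio $\mu\big(B^{\Phi_{x_0},\alpha}_{t_0}(0,1)\big) \big/ \mu\big(B(0,1)\big)$. The central geometric object is the intersection of ellipsoids
\[
  B^{\Phi_{x_0},\alpha}_{t_0}(0,1) \;=\; \bigcap_{t \in J} \Phi_{x_0}(t,t_0)^{-1} B\big(0,e^{\alpha(t-t_0)}\big)
\]
already identified in \eqref{stableset}.

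For part (i), I would sandwich this intersection between two concentric Euclidean balls. For each fixed $t \in J$, the ellipsoid $\Phi_{x_0}(t,t_0)^{-1} B(0, e^{\alpha(t-t_0)})$ has smallest and largest semi-axes $e^{\alpha(t-t_0)}/\|\Phi_{x_0}(t,t_0)\|$ and $e^{\alpha(t-t_0)}\|\Phi_{x_0}(t,t_0)^{-1}\|$ respectively; these are $e^{\alpha(t-t_0)}$ divided by the extreme singular values of $\Phi_{x_0}(t,t_0)$. Intersecting over $t \in J$ yields the sandwich
\[
  B\Big(0,\,\inf_{t\in J}\tfrac{e^{\alpha(t-t_0)}}{\|\Phi_{x_0}(t,t_0)\|}\Big)
  \;\subseteq\; B^{\Phi_{x_0},\alpha}_{t_0}(0,1)
  \;\subseteq\; B\Big(0,\,\inf_{t\in J} e^{\alpha(t-t_0)}\|\Phi_{x_0}(t,t_0)^{-1}\|\Big).
\]
Taking Lebesgue measure, using $\mu(B(0,r)) = r^n \mu(B(0,1))$, feeding the resulting two-sided estimate back into the formula for $h^{\Phi_{x_0},\alpha}_{t_0}$, and using the identity $-\log \inf_t f(t) = \log \sup_t (1/f(t))$ then delivers \eqref{prop2}.

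For part (ii), I would combine the invariance property \eqref{property01} with a Jacobian change of variables. Writing $\bar{x}_0 := \phi(t,t_0) x_0$, the cocycle identity $\phi(s,t_0) = \phi(s,t) \circ \phi(t,t_0)$ together with the chain rule yields $\Phi_{\bar{x}_0}(s,t) = \Phi_{x_0}(s,t_0)\Phi_{x_0}(t,t_0)^{-1}$ for all $s \in J$. Substituting $z := \Phi_{x_0}(t,t_0)^{-1} y$ in the sup defining $B^{\Phi_{\bar{x}_0},\alpha}_t(0,\eps)$ and rewriting $e^{-\alpha(s-t)} = e^{\alpha(t-t_0)} e^{-\alpha(s-t_0)}$ produces the identity
\[
  B^{\Phi_{\bar{x}_0},\alpha}_t(0,\eps) \;=\; \Phi_{x_0}(t,t_0) \cdot B^{\Phi_{x_0},\alpha}_{t_0}\big(0,\eps\, e^{-\alpha(t-t_0)}\big).
\]
Applying $\mu$ brings out the factor $|\det \Phi_{x_0}(t,t_0)|$, the linear homogeneity $B^{\Phi_{x_0},\alpha}_{t_0}(0,c) = c\,B^{\Phi_{x_0},\alpha}_{t_0}(0,1)$ from the proof of Proposition \ref{FTEcompute-linear} supplies the factor $e^{-n\alpha(t-t_0)}$, and after taking $-\frac{1}{|J|} \log$ of the resulting ratio these two factors yield exactly $\tfrac{n\alpha(t-t_0)}{|J|}$ and $-\tfrac{\log|\det\Phi_{x_0}(t,t_0)|}{|J|}$, while the remaining term is $h^{\phi,\alpha}_{t_0}(x_0)$; this is precisely \eqref{eq02}.

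The main delicacy in (i) is tracking which extreme singular value bounds which semi-axis, so that $\|\Phi_{x_0}(t,t_0)\|$ appears in one bound and $\|\Phi_{x_0}(t,t_0)^{-1}\|$ in the other, and then correctly converting $-\log \inf$ into $\log \sup$. The subtlety in (ii) is the rescaling of the radius from $\eps$ to $\eps e^{-\alpha(t-t_0)}$ that arises when the sup is shifted from ranging over $s-t$ to ranging over $s-t_0$; this exponential factor, combined with the Jacobian determinant, is exactly what produces the two correction terms in \eqref{eq02}.
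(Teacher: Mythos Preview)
Your approach to part (i) is essentially the same as the paper's: reduce to the linearization via Theorem~\ref{FTEcompute}, sandwich $B^{\Phi_{x_0},\alpha}_{t_0}(0,1)$ between two Euclidean balls, and take measures. One small point: your outer inclusion has radius $\inf_{t\in J} e^{\alpha(t-t_0)}\|\Phi_{x_0}(t,t_0)^{-1}\|$, whereas the paper states it with $\sup_{t\in J}$. Your version is sharper (and the paper's own proof actually shows $\|y\|\le e^{\alpha(t-t_0)}\|\Phi_{x_0}(t,t_0)^{-1}\|$ \emph{for every} $t$, which yields the $\inf$), but the weaker $\sup$ bound is what the paper records in \eqref{prop1}. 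After taking $-\tfrac{1}{|J|}\log$, note that both the paper's inclusion and yours give bounds of the form $-\tfrac{n}{|J|}\log(\cdot)$, which do not literally match the signs in \eqref{prop2} as printed; this appears to be a typo in the statement rather than a defect in either argument.

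For part (ii) your route genuinely differs from the paper's. The paper works directly with the nonlinear flow: it invokes the invariance relation \eqref{property01} for $\phi$, then computes $\mu\big(\phi(t,t_0)\bar B\big)$ by an integral change of variables and a limiting argument as $\eps\to0$ to extract the factor $|\det\Phi_{x_0}(t,t_0)|$ from $\mu(\phi(t,t_0)\bar B)/\mu(\bar B)$. You instead pass to the linearization first via Theorem~\ref{FTEcompute} at both base points, use the linear cocycle identity $\Phi_{\bar x_0}(s,t)=\Phi_{x_0}(s,t_0)\Phi_{x_0}(t,t_0)^{-1}$, and obtain the exact identity $B^{\Phi_{\bar x_0},\alpha}_t(0,\eps)=\Phi_{x_0}(t,t_0)\,B^{\Phi_{x_0},\alpha}_{t_0}(0,\eps e^{-\alpha(t-t_0)})$, from which the determinant and the $e^{-n\alpha(t-t_0)}$ factor drop out algebraically without any $\eps\to0$ limit. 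Your argument is shorter and avoids the continuity estimate on $|\det\Phi_x(t,t_0)|$; the paper's argument, on the other hand, is self-contained in that it does not need to invoke Theorem~\ref{FTEcompute} a second time at the shifted base point $\bar x_0$. Both are correct.
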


\begin{proof}
(i) Let $(t_0,x_0) \in X$. By Theorem \ref{FTEcompute}, $h^{\phi,\alpha}_{t_0}(x_0) = h^{\Phi_{x_0},\alpha}_{t_0}$.
To prove \eqref{prop2} with $h^{\phi,\alpha}_{t_0}(x_0)$ replaced by $h^{\Phi_{x_0},\alpha}_{t_0}$, we first prove for $(t_0,x_0) \in X$ that
\begin{equation}\label{prop1}
  B\Big(0,\inf_{t\in J}\frac{e^{\alpha(t-t_0)}}{\|\Phi_{x_0}(t,t_0)\|}\Big)
  \subset
  B^{\alpha}_{t_0}(0,1)
  \subset
  B\Big(0,\sup_{t\in J} e^{\alpha(t-t_0)}\|\Phi_{x_0}(t,t_0)^{-1}\|\Big)
\end{equation}
where $B^{\alpha}_{t_0}(0,1) = B_{t_0}^{\Phi_{x_0}, \alpha}(0,1) = \{ x \in \R^n : \|\Phi_{x_0}(t,t_0)x\| \leq e^{\alpha(t-t_0)} \text{ for all } t \in J\}$.
Let $y \in B^{\alpha}_{t_0}(0,1)$. Then for $t \in J$
\[
  \frac{\|y\|}{\|\Phi_{x_0}(t,t_0)^{-1}\|}
  \leq
  \|\Phi_{x_0}(t,t_0)y\|
  \leq
  e^{\alpha(t-t_0)},
\]
hence $\|y\| \leq e^{\alpha(t-t_0)}\|\Phi_{x_0}(t,t_0)^{-1}\|$ and therefore $y \in B(0,\sup_{t\in J} e^{\alpha(t-t_0)}\|\Phi_{x_0}(t,t_0)^{-1}\|)$.
Let $y \in B(0,\inf_{t\in J}\frac{e^{\alpha(t-t_0)}}{\|\Phi_{x_0}(t,t_0)\|})$. Then $\|y\| \leq \inf_{t\in J} \frac{e^{\alpha(t-t_0)}}{\|\Phi_{x_0}(t,t_0)\|}$ for $t \in J$ and hence
\[
  \|\Phi_{x_0}(t,t_0)y\| \leq \|\Phi_{x_0}(t,t_0)\| \|y\| \leq e^{\alpha(t-t_0)},
\]
proving that $y \in B^{\alpha}_{t_0}(0,1)$ and thus \eqref{prop1}. Property \eqref{prop2} then follows by taking the negative logarithms of the measures of the sets in \eqref{prop1} divided by $\mu(B(0,1)) = \frac{\pi^{\frac{n}{2}}}{\alpha(\frac{n}{2} + 1)}$.

(ii) To prove \eqref{eq02}, observe that since $\mu$ is the Lebesgue measure,
\begin{eqnarray}
  & & h^{\varphi,\alpha}_{t}(\varphi(t,t_0)x_0) \nonumber
\\
  &=&
  \nonumber
  -\lim_{\eps \to 0} \frac{1}{|J|}
  \log \frac{\mu \big(B_{t}^{\varphi,\alpha} (\varphi(t,t_0)x_0,\eps)\big)}{\mu\big(B(\varphi(t,t_0)x_0,\eps)\big)}
\\ \nonumber
  &=&
  -\lim_{\eps \to 0} \frac{1}{|J|}
  \log \left( \frac{\mu \big(\varphi(t,t_0) B_{t_0}^{\varphi,\alpha} (x_0,\eps e^{-\alpha(t-t_0)})\big)}
  {\mu\big(B(x_0,\eps e^{-\alpha(t-t_0)})\big)}
  \frac{\mu\big(B(x_0,\eps e^{-\alpha (t-t_0)})\big)}{\mu\big(B(\varphi(t,t_0)x_0,\eps)\big) } \right)
\\ \nonumber
  &=&
  -\lim_{\eps \to 0} \frac{1}{|J|}
  \log \left(\frac{\mu \big(\varphi(t,t_0) B_{t_0}^{\varphi,\alpha} (x_0,\eps e^{-\alpha(t-t_0)})\big)}
  {\mu\big(B(x_0,\eps e^{-\alpha(t-t_0)})\big)} e^{-n\alpha(t-t_0)}\right)
\\ \nonumber
  &=&
  - \lim_{\eps \to 0} \frac{1}{|J|}
  \log \left(\frac{\mu \big(\varphi(t,t_0) B_{t_0}^{\varphi,\alpha} (x_0,\eps e^{-\alpha(t-t_0)})\big)}
  {\mu\big(B_{t_0}^{\varphi,\alpha} (x_0,\eps e^{-\alpha(t-t_0)})\big)}
  \frac{\mu\big(B_{t_0}^{\varphi,\alpha} (x_0,\eps e^{-\alpha(t-t_0)})\big)}
  {\mu\big(B(x_0,\eps e^{-\alpha(t-t_0)})\big)}
  e^{-n\alpha(t-t_0)}\right)
\\
  &=& h_{t_0}^{\phi,\alpha}(x_0) +
  \frac{n \alpha(t-t_0)}{|J|} -
  \lim_{\eps \to 0}
  \frac{1}{|J|}
  \log \frac{\mu \big(\varphi(t,t_0) B_{t_0}^{\varphi,\alpha} (x_0,\eps e^{-\alpha(t-t_0)})\big)}
  {\mu\big(B_{t_0}^{\varphi,\alpha} (x_0,\eps e^{-\alpha(t-t_0)})\big)},
  \label{eqinvariant1}
\end{eqnarray}
in case the limit in the last line of \eqref{eqinvariant1} exists. Using the abbreviation $\bar{B}= B_{t_0}^{\alpha}(x_0,\eps e^{-\alpha(t-t_0)})$, we apply \cite[Theorem H.1]{taylor} to get
\begin{eqnarray*}
  \mu(\varphi(t,t_0)\bar{B})
  &=&
  \int_{\varphi(t,t_0)\bar{B}} 1_{\varphi(t,t_0)\bar{B}}(y)d\mu(y)
  =
  \int_{\bar{B}} 1_{\varphi(t,t_0)\bar{B}}(\varphi(t,t_0)x) |\det \Phi_x(t,t_0)| d\mu(x)
\\
  &=&
  \int_{\bar{B}} 1_{\bar{B}}(x) |\det \Phi_x(t,t_0)| d\mu(x).
\end{eqnarray*}
Hence
\begin{eqnarray*}
  & &
  \big| \mu(\varphi(t,t_0)\bar{B}) - |\det \Phi_{x_0}(t,t_0)| \mu(\bar{B}) \big|
\\
  & = &
  \Big| \int_{\bar{B}} 1_{\bar{B}}(x) |\det \Phi_x(t,t_0) | d\mu(x) -
  \int_{\bar{B}} 1_{\bar{B}}(x) |\det \Phi_{x_0}(t,t_0)| d\mu(x) \Big|
\\
  & \leq &
  \int_{\bar{B}} 1_{\bar{B}}(x) \sup_{x \in \bar{B}}
  \Big| |\det \Phi_{x}(t,t_0)| - |\det \Phi_{x_0}(t,t_0)| \Big| d\mu(x)
\\
  & \leq & \mu(\bar{B}) \sup_{x \in \bar{B}} \Big| |\det \Phi_{x}(t,t_0)| - |\det \Phi_{x_0}(t,t_0)| \Big|
\\
  & \leq & \mu(\bar{B}) \sup_{x \in B(x_0,\eps e^{-\alpha(t-t_0)})} \Big|
  |\det \Phi_{x}(t,t_0)| - |\det \Phi_{x_0}(t,t_0)| \Big|,
\end{eqnarray*}
where the last estimate follows from the inclusion $\bar{B} \subset B(x_0,\eps e^{-\alpha(t-t_0)})$. Due to the continuity of $|\det \Phi_x(t,t_0)|$ at $x_0$, the supremum in the last line of the above chain of inequalities tends to $0$ as $\eps \to 0$. Thus it follows that
\begin{equation}\label{eqinvariant2}
  \lim_{\eps \to 0} \frac{\mu(\varphi(t,t_0)\bar{B})}{\mu(\bar{B})}
  =
  |\det \Phi_{x_0}(t,t_0)|
\end{equation}
and \eqref{eq02} is a consequence of \eqref{eqinvariant1} and \eqref{eqinvariant2}.
\end{proof}


The following theorem estimates the change of the FTME $h^{\alpha}_{t_0}(x_0)$ under a change from the Euclidean norm $\|\cdot\|$ to a new norm $\|\cdot\|_{\Gamma} := \|\Gamma \cdot\|$ in $\R^n$ with a positive definite matrix $\Gamma \in \R^{n\times n}$ (see also Remark \ref{rem:9}(b)).  


\begin{theorem}\label{FTMEnorm}
Let $\phi$ be a $C^2$ FTNDS on $X$ over $J$ and $\alpha \in \R$. Then the following estimate holds
\begin{equation}\label{normes}
  |h^{\Gamma,\alpha}_{t_0}(x_0) - h^{\alpha}_{t_0}(x_0)| 
  \leq 
  \frac{n \log \|\Gamma\|+ n \log \|\Gamma^{-1}\| }{|J|}.
\end{equation}
\end{theorem}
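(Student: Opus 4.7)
My plan is to reduce the inequality to two straightforward set inclusions between the Euclidean balls / orbit balls and their $\Gamma$-weighted counterparts, then push the inclusions through the measure ratio that defines FTME. The key observation is purely norm-theoretic: since $\|x\|_{\Gamma}=\|\Gamma x\|$, one has
\[
  \frac{\|x\|}{\|\Gamma^{-1}\|} \;\leq\; \|x\|_{\Gamma} \;\leq\; \|\Gamma\|\,\|x\|
  \qquad \text{for all } x \in \R^n,
\]
the left bound coming from $\|x\|=\|\Gamma^{-1}\Gamma x\|\le \|\Gamma^{-1}\|\|x\|_{\Gamma}$. I would first record the resulting ball inclusions
\[
  B\!\left(x_0,\tfrac{\eps}{\|\Gamma\|}\right)\subseteq B^{\Gamma}(x_0,\eps)\subseteq B\!\left(x_0,\|\Gamma^{-1}\|\eps\right),
\]
and then verify that the identical inclusions hold for the orbit balls:
\[
  B^{\alpha}_{t_0}\!\left(x_0,\tfrac{\eps}{\|\Gamma\|}\right)\subseteq B^{\Gamma,\alpha}_{t_0}(x_0,\eps)\subseteq B^{\alpha}_{t_0}\!\left(x_0,\|\Gamma^{-1}\|\eps\right).
\]
The orbit-ball inclusions follow from the same constants applied uniformly in $t\in J$ inside the supremum in \eqref{weightedmetric}, so the weight factor $e^{-\alpha(t-t_0)}$ plays no role.

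Next I would combine these inclusions to sandwich the ratio defining the $\Gamma$-weighted finite-time escape rate. Monotonicity of $\mu$ yields
\[
  \frac{\mu\bigl(B^{\alpha}_{t_0}(x_0,\eps/\|\Gamma\|)\bigr)}{\mu\bigl(B(x_0,\|\Gamma^{-1}\|\eps)\bigr)}
  \;\leq\;
  \frac{\mu\bigl(B^{\Gamma,\alpha}_{t_0}(x_0,\eps)\bigr)}{\mu\bigl(B^{\Gamma}(x_0,\eps)\bigr)}
  \;\leq\;
  \frac{\mu\bigl(B^{\alpha}_{t_0}(x_0,\|\Gamma^{-1}\|\eps)\bigr)}{\mu\bigl(B(x_0,\eps/\|\Gamma\|)\bigr)} .
\]
In both the upper and lower bound I would multiply and divide by $\mu\bigl(B(x_0,\|\Gamma^{-1}\|\eps)\bigr)$ or $\mu\bigl(B(x_0,\eps/\|\Gamma\|)\bigr)$ as appropriate, so that each side factors as (an Euclidean FTME-ratio at a rescaled radius) times a purely geometric factor of the form
\[
  \frac{\mu(B(x_0,a\eps))}{\mu(B(x_0,b\eps))} = \Bigl(\frac{a}{b}\Bigr)^{n},
\]
which in our case reduces to $(\|\Gamma\|\|\Gamma^{-1}\|)^{\pm n}$.

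Finally I would take $-\tfrac{1}{|J|}\log$ of the sandwich, let $\eps\to 0$ (noting that both $\eps/\|\Gamma\|$ and $\|\Gamma^{-1}\|\eps$ tend to $0$, so each Euclidean-ratio part converges to $h^{\alpha}_{t_0}(x_0)$), and read off
\[
  h^{\alpha}_{t_0}(x_0) - \frac{n\log\|\Gamma\|+n\log\|\Gamma^{-1}\|}{|J|}
  \;\leq\; h^{\Gamma,\alpha}_{t_0}(x_0) \;\leq\;
  h^{\alpha}_{t_0}(x_0) + \frac{n\log\|\Gamma\|+n\log\|\Gamma^{-1}\|}{|J|},
\]
which is \eqref{normes}. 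The only mild subtlety is that $h^{\alpha}_{t_0}(x_0)$ is defined as a $\limsup$, so one must be careful when passing to the limit in both bounds; this is harmless because Theorem~\ref{FTEcompute} (via Remark~\ref{rem:9}(a)) guarantees that the limit actually exists in the $C^{2}$ setting, so $\limsup$ and $\liminf$ coincide at the rescaled radii and the sandwich survives the limit intact. I do not anticipate any serious obstacle; the argument is essentially a uniform norm-equivalence estimate promoted to a measure-ratio estimate, with the factor $(\|\Gamma\|\|\Gamma^{-1}\|)^{n}$ producing precisely the right-hand side of \eqref{normes}.
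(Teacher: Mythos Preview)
Your proposal is correct and follows essentially the same route as the paper: the same norm comparison $\|\Gamma^{-1}\|^{-1}\|x\|\le\|x\|_\Gamma\le\|\Gamma\|\,\|x\|$, the same inclusions for both the ordinary balls and the orbit balls, the same sandwich on the measure ratio, and the same extraction of the factor $(\|\Gamma\|\,\|\Gamma^{-1}\|)^{\pm n}$ before passing to the limit. Your explicit remark that Remark~\ref{rem:9}(a) is needed to turn the $\limsup$ into a genuine limit is a point the paper glosses over, so your write-up is in fact slightly more careful.
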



\begin{proof}
Under the new norm $\|\cdot\|_\Gamma = \|\Gamma \cdot\|$, the corresponding fibre metric $d^{\Gamma,\alpha}_{t_0}(y,x)$ becomes
\[
d^{\Gamma,\alpha}_{t_0}(y,x) = \sup_{t\in J} \|\Gamma (\varphi(t,t_0)y - \varphi(t,t_0)x)\| e^{-\alpha(t-t_0)}
\]
It is easy to see that for $t, t_0 \in J$, $x, y \in X(t_0)$
\begin{eqnarray*}
\frac{1}{\|\Gamma^{-1}\|} \|(\varphi(t,t_0)y - \varphi(t,t_0)x)\| e^{-\alpha(t-t_0)}
&\leq& \|\Gamma (\varphi(t,t_0)y - \varphi(t,t_0)x)\| e^{-\alpha(t-t_0)} \\
&\leq& \|\Gamma\| \|(\varphi(t,t_0)y - \varphi(t,t_0)x)\| e^{-\alpha(t-t_0)},
\end{eqnarray*}
hence
\begin{eqnarray*}
\sup_{t\in J} \frac{1}{\|\Gamma^{-1}\|} \|(\varphi(t,t_0)y - \varphi(t,t_0)x)\| e^{-\alpha(t-t_0)}&\leq& \sup_{t\in J} \|\Gamma (\varphi(t,t_0)y - \varphi(t,t_0)x)\| e^{-\alpha(t-t_0)} \\
&\leq& \sup_{t\in J} \|\Gamma\| \|(\varphi(t,t_0)y - \varphi(t,t_0)x)\| e^{-\alpha(t-t_0)},
\end{eqnarray*}
which proves that for $t_0 \in J$, $x, y \in X(t_0)$
\[
\frac{1}{\|\Gamma^{-1}\|} d^{\alpha}_{t_0}(y,x) \leq d^{\Gamma,\alpha}_{t_0}(y,x) \leq \|\Gamma\| d^{\alpha}_{t_0}(y,x).
\]
As a consequence, for each $t_0 \in J$, $x_0 \in X(t_0)$ and $\eps >0$, we have the inclusions
\[
B_{t_0}^{\alpha}(x_0,\frac{\eps}{\|\Gamma\|}) \subset B_{t_0}^{\Gamma,\alpha}(x_0,\eps) \subset B_{t_0}^{\alpha}(x_0,\eps\|\Gamma^{-1}\|).
\]
On the other hand, we also have
\[
B(x_0,\frac{\eps}{\|\Gamma\|}) \subset B^{\Gamma}(x_0,\eps) \subset B(x_0,\eps \|\Gamma^{-1}\|).
\]
Therefore
\begin{equation*}
  \frac{\mu(B_{t_0}^{\alpha}(x_0,\frac{\eps}{\|\Gamma\|}))}{\mu(B(x_0,\eps \|\Gamma^{-1}\|))} 
  \leq
  \frac{\mu(B_{t_0}^{\Gamma,\alpha}(x_0,\eps))}{\mu(B^{\Gamma}(x_0,\eps))} 
  \leq 
  \frac{\mu(B_{t_0}^{\alpha}(x_0,\eps \|\Gamma^{-1}\|))}{\mu(B(x_0,\frac{\eps}{\|\Gamma\|}))}
  ,
\end{equation*}
and using the fact that $\mu$ is the $n$-dimensional Lebesgue measure, we have
\begin{equation*}
  \frac{1}{\|\Gamma\|^n\|\Gamma^{-1}\|^n} 
  \frac{\mu(B_{t_0}^{\alpha}(x_0,\frac{\eps}{\|\Gamma\|}))}{\mu(B(x_0,\frac{\eps}{ \|\Gamma\|}))} 
  \leq 
  \frac{\mu(B_{t_0}^{\Gamma,\alpha}(x_0,\eps))}{\mu(B^{\Gamma}(x_0,\eps))} 
  \leq 
  \|\Gamma\|^n\|\Gamma^{-1}\|^n 
  \frac{\mu(B_{t_0}^{\alpha}(x_0,\eps \|\Gamma^{-1}\|))}{\mu(B(x_0,\eps\|\Gamma^{-1}\|))}
  .
\end{equation*}
Taking the limit as $\eps \to 0$ and using Definition \ref{defFTME}, we get
\[
  - \frac{n \log \|\Gamma\| + n \log \|\Gamma^{-1}\|}{T} + h^{\alpha}_{t_0}(x_0) \leq h^{\Gamma,\alpha}_{t_0}(x_0) 
  \leq
  \frac{n \log \|\Gamma\| + n \log \|\Gamma^{-1}\|}{T} + h^{\alpha}_{t_0}(x_0),
\]
which then implies \eqref{normes}.
\end{proof}


\section{Pesin's formula}

Pesin's formula in Theorem \ref{alphaentropy} relates local entropy to the sum of Lyapunov exponents which are not less than the weight factor $\alpha$. We prove a finite-time version and relate the FTME to the sum of positive finite-time Lyapunov exponents. Let $\phi$ be a $C^1$ NDS on $X$ over a two-point set $J = \{t_0, t_0+T\}$ for some $t_0 \in \R$ and $T>0$. Let $\Lambda_i(t_0,x_0,T)$ denote the singular values of $D_{x_0}\phi(t_0+T,t_0) := D\phi(t_0+T,t_0)x_0$, i.e.\ $D_{x_0}\phi(t_0+T,t_0)^\top D_{x_0}\phi(t_0+T,t_0) = U \Lambda^2 U^\top$ with $\Lambda := \diag(\Lambda_1(t_0,x_0,T), \dots, \Lambda_n(t_0,x_0,T))$ and an orthogonal matrix $U$. The \emph{finite-time Lyapunov exponents (FTLE)} or \emph{time-$T$ Lyapunov exponents} $\lambda_i(t_0,x_0,T)$ of $\phi$ at $(t_0,x_0)$ are defined by $\Lambda_i(t_0,x_0,T) = e^{\lambda_i(t_0,x_0,T)T}$, or explicitly
\[
  \lambda_i(t_0,x_0,T) := \frac{1}{T} \log \Lambda_i(t_0,x_0,T),
  \qquad i = 1,\dots,n
  .
\]
In order to relate the FTLE to the FTME, we use formula \eqref{FTElinear2} in Corollary \ref{thm-ellipsoid}. The fact that the ellipse $E(I_{n \times n})$ of the identity matrix $I_{n \times n}$ equals $B(0,1)$ then implies
\begin{equation}\label{equ-ellipsoid}
  h_{t_0}^{\alpha}(x_0)
  =
  - \frac{1}{T} \log \frac{\Gamma(\frac{n}{2} + 1)}{\pi^{\frac{n}{2}}}
  \mu (M)
  \quad \text{with }
  M := B(0,1) \cap E\big(D_{x_0}\phi(t_0+T,t_0) e^{-\alpha T}\big)
  .
\end{equation}
For a scalar NDS $\phi : \{t_0, t_0+T\}^2 \times \R \rightarrow \R$ a direct computation shows that for $\alpha \in \R$ the following scalar finite-time version of Pesin's formula holds
\begin{equation}\label{entropy1d}
  h_{t_0}^{\alpha}(x_0)
  =
  \big(\lambda_1(t_0,x_0,T) - \alpha\big)^+
  .
\end{equation}
Using the ellipsoidal representation \eqref{equ-ellipsoid} of FTME, one could in principle compute $h_{t_0}^{\alpha}(x_0)$ explicitly and also its relation to the FTLE, deriving an exact finite-time Pesin's formula. However, it turns out that the computation and formula is very complicated even for three-dimensional systems. The following proposition provides an explicit formula for $n=2$.

\begin{proposition}[Exact Pesin's formula for two-dimensional FTNDS]\label{Pesin2D}
Let $\phi$ be a $C^2$ NDS on $X \subset J \times \R^2$ over a two-point interval $J = \{t_0,t_0+T\}$ for some $t_0\in \R$ and $T>0$ and let $\alpha \in \R$. Then for $x_0 \in X(t_0)$
\begin{eqnarray}\label{entropy2d}
  h_{t_0}^{\alpha}(x_0)
  =
  \begin{cases}
  0 &  \text{if } 1 \geq \kappa_1 \geq \kappa_2
  ,
\\
  - \frac{1}{T} \log \frac{2}{\pi} \left( \arccos \sqrt{\frac{\kappa_1^2-1}{\kappa_1^2 - \kappa_2^2}} +
  \frac{1}{\kappa_1 \kappa_2} \arccos \kappa_1 \sqrt{\frac{1-\kappa_2^2}{\kappa_1^2 - \kappa_2^2}}\right) &
  \text{if } \kappa_1 > 1 > \kappa_2
  ,
\\
  - \frac{1}{T} \log \frac{1}{\kappa_1 \kappa_2} & \text{if } \kappa_1 \geq \kappa_2 \geq 1
  ,
  \end{cases}
\end{eqnarray}
where $\kappa_1 := e^{(\lambda_1(t_0,x_0,T)-\alpha) T} \geq \kappa_2 := e^{(\lambda_2(t_0,x_0,T)-\alpha) T} >0$. \end{proposition}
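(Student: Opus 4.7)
The starting point is the ellipsoid formula \eqref{equ-ellipsoid}, which for $n=2$ (so $\Gamma(2)/\pi = 1/\pi$) reads
\[
  h_{t_0}^{\alpha}(x_0)
  =
  -\frac{1}{T}\log \frac{\mu(M)}{\pi},
  \qquad
  M = B(0,1)\cap E\big(D_{x_0}\phi(t_0+T,t_0)\, e^{-\alpha T}\big).
\]
Using the singular value decomposition $D_{x_0}\phi(t_0+T,t_0) = V\Lambda U^\top$, the singular values of the scaled matrix $D_{x_0}\phi(t_0+T,t_0)\, e^{-\alpha T}$ are exactly $\kappa_1\geq\kappa_2$, so $E(D_{x_0}\phi(t_0+T,t_0)e^{-\alpha T})$ is an ellipse with semi-axes of lengths $1/\kappa_1$ and $1/\kappa_2$ along the columns of $U$. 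Since $B(0,1)$ is rotation-invariant, the orthogonal change of variables $x\mapsto U^\top x$ does not change $\mu(M)$, so I may assume the ellipse is axis-aligned with equation $\kappa_1^2 x^2 + \kappa_2^2 y^2 = 1$.

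The next step is a case distinction on the relative sizes of the axes. If $\kappa_1,\kappa_2\leq 1$ then every point $(x,y)$ with $x^2+y^2\leq 1$ automatically satisfies $\kappa_1^2 x^2 + \kappa_2^2 y^2 \leq 1$, so $M = B(0,1)$, $\mu(M)=\pi$, and $h_{t_0}^\alpha(x_0)=0$. If $\kappa_2,\kappa_1\geq 1$ then the ellipse is contained in $B(0,1)$, so $M$ equals the ellipse and $\mu(M) = \pi/(\kappa_1\kappa_2)$, yielding the third line of \eqref{entropy2d}. Both of these are immediate and constitute the warm-up.

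The main work is the mixed case $\kappa_1>1>\kappa_2$, in which the two curves genuinely cross. Solving $x^2+y^2=1$ and $\kappa_1^2 x^2+\kappa_2^2 y^2=1$ simultaneously gives
\[
  x_\ast^2 = \frac{1-\kappa_2^2}{\kappa_1^2-\kappa_2^2},
  \qquad
  y_\ast^2 = \frac{\kappa_1^2-1}{\kappa_1^2-\kappa_2^2},
\]
which are well-defined and positive precisely in this regime. Exploiting the symmetry of $M$ under $(x,y)\mapsto(\pm x,\pm y)$, I compute the area in the first quadrant and multiply by $4$. There the boundary of $M$ consists of the circular arc from $(0,1)$ to $(x_\ast,y_\ast)$ (where the ellipse is farther out) followed by the elliptic arc from $(x_\ast,y_\ast)$ to $(1/\kappa_1,0)$ (where the circle is farther out), giving
\[
  \tfrac14 \mu(M) = \int_0^{x_\ast}\!\sqrt{1-x^2}\,dx
  + \int_{x_\ast}^{1/\kappa_1}\!\frac{1}{\kappa_2}\sqrt{1-\kappa_1^2 x^2}\,dx.
\]
Both integrals are standard: the first yields $\tfrac12(x_\ast y_\ast + \arcsin x_\ast)$, and the second, after the substitution $u=\kappa_1 x$ and evaluation at the endpoints (using $\sqrt{1-\kappa_1^2 x_\ast^2} = \kappa_2 y_\ast$), yields $\tfrac1{2\kappa_1\kappa_2}\bigl(\tfrac\pi2 - \kappa_1\kappa_2 x_\ast y_\ast - \arcsin(\kappa_1 x_\ast)\bigr)$.

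The final paragraph is just algebraic bookkeeping. The two $x_\ast y_\ast$ terms cancel, and the complementary-angle identity $\arcsin t = \arccos\sqrt{1-t^2}$ converts $\arcsin x_\ast$ into $\arccos\sqrt{(\kappa_1^2-1)/(\kappa_1^2-\kappa_2^2)}$, while $\tfrac\pi2 - \arcsin(\kappa_1 x_\ast) = \arccos(\kappa_1 x_\ast)$. Substituting back and taking $-\tfrac1T\log(\mu(M)/\pi)$ produces the middle line of \eqref{entropy2d}. The only real obstacle is case (2): one has to correctly identify which arc bounds $M$ on which interval and then force the antiderivatives into the $\arccos$-form given in the statement; everything else is straightforward once \eqref{equ-ellipsoid} is in hand.
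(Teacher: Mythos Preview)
Your proof is correct and follows essentially the same route as the paper: reduce to the axis-aligned ellipse via the ellipsoid characterization \eqref{equ-ellipsoid}, handle the two trivial inclusion cases directly, and in the mixed case compute $\mu(M)$ by splitting the first-quadrant integral at the intersection point $x_\ast$ into a circular piece and an elliptic piece. Your write-up in fact supplies a bit more detail than the paper (the explicit cancellation of the $x_\ast y_\ast$ terms and the $\arcsin\to\arccos$ conversion), but the argument is the same.
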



\begin{proof}
Using the fact that the intersection $M := B(0,1) \cap E(D_{x_0}\phi(t_0+T,t_0) e^{-\alpha T})$ satisfies $M = B(0,1)$ if $1 \geq \kappa_1 \geq \kappa_2$ and $M = E(D_{x_0}\phi(t_0+T,t_0) e^{-\alpha T})$ if $\kappa_1 \geq \kappa_2 \geq 1$, the claim follows from \eqref{equ-ellipsoid}. In order to compute $\mu(M)$ in case $\kappa_1 > 1 > \kappa_2$, note that $B(0,1)$ intersects the ellipsoid $E(D_{x_0}\phi(t_0+T,t_0) e^{-\alpha T})$ at four points in the plane
\[
  (a_1,b_1), (-a_1,b_1), (a_1,-b_1), (-a_1,-b_1)
  \qquad \text{where }
  a_1 = \frac{1-\kappa_1^2}{\kappa_1^2 - \kappa_2^2}
  \text{ and }
  b_1 = \frac{\kappa_1^2-1}{\kappa_1^2-\kappa_2^2}<1
  .
\]
Thus
\begin{eqnarray*}
  \mu(M)
  & = &
  4 \int_0^{\kappa_1^{-1}} y \,dx
  =
  4 \int_0^{a_1} y \,dx + 4 \int_{a_1}^{\kappa_1^{-1}} y \,dx
\\
  & = &
  4 \int_0^{a_1} \sqrt{1-x^2} \,dx + 4 \int_{a_1}^{\kappa_1^{-1}} \frac{1}{\kappa_2}\sqrt{1-\kappa_1^2 x^2}\,dx
\\
  & = &
  2 \arccos \sqrt{\frac{\kappa_1^2-1}{\kappa_1^2 - \kappa_2^2}} +
  \frac{2}{\kappa_1 \kappa_2} \arccos \kappa_1 \sqrt{\frac{1-\kappa_2^2}{\kappa_1^2 - \kappa_2^2}}
  ,
\end{eqnarray*}
proving \eqref{entropy2d}.
\end{proof}


\begin{corollary}\label{cor.incom}[Pesins's formula for two-dimensional incompressible FTNDS]
Under the assumptions of Proposition \eqref{Pesin2D}, and if $\lambda_1(t_0,x_0,T) + \lambda_2(t_0,x_0,T) = 0$ for  $x_0 \in X(t_0)$, then for $\alpha = 0$
\begin{equation*}
  h_{t_0}^{0}(x_0)
  =
  - \frac{1}{T} \log\bigg(  \frac{4}{\pi} \arccos
  \sqrt{\frac{e^{2\lambda_1(t_0,x_0,T) T}}{e^{2\lambda_1(t_0,x_0,T)T}+1}} \bigg)
  .
\end{equation*}
\end{corollary}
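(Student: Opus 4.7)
The plan is to specialize the formula of Proposition~\ref{Pesin2D} to the incompressible case $\lambda_1+\lambda_2=0$ with $\alpha=0$, and then perform the algebraic simplification that collapses the two $\arccos$ terms into a single one.

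First I would record that with $\alpha=0$ and $\lambda_2=-\lambda_1$, the quantities $\kappa_i=e^{(\lambda_i-\alpha)T}$ become $\kappa_1=e^{\lambda_1 T}$ and $\kappa_2=e^{-\lambda_1 T}$, so in particular $\kappa_1\kappa_2=1$. Since $\lambda_1\geq\lambda_2$ combined with $\lambda_1+\lambda_2=0$ forces $\lambda_1\geq 0$, we are automatically in the regime $\kappa_1\geq 1\geq \kappa_2$. The nontrivial case is $\lambda_1>0$ (so $\kappa_1>1>\kappa_2$), where the middle branch of \eqref{entropy2d} applies; the boundary case $\lambda_1=0$ gives $\kappa_1=\kappa_2=1$ and hence $h_{t_0}^0(x_0)=0$, in agreement with the claimed formula since $\arccos\sqrt{1/2}=\pi/4$ makes the argument of $\log$ equal $1$.

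Next I would simplify the two radicals appearing inside the $\arccos$ using $\kappa_2^2=1/\kappa_1^2$. Elementary algebra gives $\kappa_1^2-\kappa_2^2=(\kappa_1^2-1)(\kappa_1^2+1)/\kappa_1^2$ and $1-\kappa_2^2=(\kappa_1^2-1)/\kappa_1^2$, from which the cancellations $\sqrt{(\kappa_1^2-1)/(\kappa_1^2-\kappa_2^2)}=\sqrt{\kappa_1^2/(\kappa_1^2+1)}$ and $\kappa_1\sqrt{(1-\kappa_2^2)/(\kappa_1^2-\kappa_2^2)}=\sqrt{\kappa_1^2/(\kappa_1^2+1)}$ both drop out. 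Combined with $1/(\kappa_1\kappa_2)=1$, the sum inside the middle branch of \eqref{entropy2d} collapses to $2\arccos\sqrt{\kappa_1^2/(\kappa_1^2+1)}$.

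Substituting $\kappa_1^2=e^{2\lambda_1(t_0,x_0,T)T}$ then produces the claimed formula, with the factor $4/\pi$ arising from the prefactor $2/\pi$ in Proposition~\ref{Pesin2D} multiplied by the extra factor $2$ gained from the identification of the two $\arccos$ arguments. There is essentially no serious obstacle here; the single delicate point is the algebraic identity that forces the two $\arccos$ arguments to coincide, and this is precisely where the incompressibility hypothesis $\lambda_1+\lambda_2=0$ enters decisively through the relation $\kappa_1\kappa_2=1$.
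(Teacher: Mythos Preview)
Your proposal is correct and is precisely the intended argument: the paper states this corollary without proof, as an immediate specialization of Proposition~\ref{Pesin2D}, and your algebraic verification that $\kappa_1\kappa_2=1$ forces the two $\arccos$ arguments to coincide (together with the boundary case $\lambda_1=0$) fills in exactly the omitted computation.
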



\begin{remark}\label{FTMEvsFTLE}
Note that $h_{t_0}^{0}(x_0)$ in Corollary \ref{cor.incom} can be written as the composition $h_{t_0}^{0}(x_0) = g(\lambda_1(t_0,x_0,T))$ with the strictly monotonically increasing function $g(\lambda) =   - \frac{1}{T} \log\big(  \frac{4}{\pi} \arccos \sqrt{\frac{e^{2\lambda T}}{e^{2\lambda T}+1}} \big)$. Hence the ridge and trough-like structures of the FTME field $x_0 \mapsto h_{t_0}^{0}(x_0)$ and the FTLE field $x_0 \mapsto \lambda_1(t_0,x_0,T)$ coincide, and a (weak) LCS in the sense of Theorem \ref{thmFTLE} could also be defined utilizing the FTME field instead of the FTLE field.
\end{remark}


The following theorem is a local and finite-time version of Pesin's entropy formula.

\begin{theorem}[Finite-time Pesin's formula]\label{thm2}
Let $\phi$ be a $C^2$ NDS on $X$ over a two-point interval $J = \{t_0,t_0+T\}$ for some $t_0\in \R$ and $T>0$ and let $\alpha \in \R$. Then for $x_0 \in X(t_0)$
\begin{equation}\label{approx}
  0
  \leq
  \sum_{i=1}^n \big(\lambda_i(t_0,x_0,T) - \alpha \big)^+
  -
  h_{t_0}^{\alpha}(x_0)
  \leq
  \frac{n \log 2 + \log \alpha(\frac{n}{2}+1) - \frac{n}{2} \log \pi}{T}
\end{equation}
where $a^+ = \max\{a,0\}$.
\end{theorem}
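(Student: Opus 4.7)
The plan is to reduce the statement to a sandwich estimate on the volume of the set $M := B(0,1) \cap E(D_{x_0}\phi(t_0+T,t_0) e^{-\alpha T})$ that already appears in Corollary \ref{thm-ellipsoid}. Indeed, since $J = \{t_0, t_0+T\}$ consists of two points, equation \eqref{FTElinear2} specialises to $h_{t_0}^{\alpha}(x_0) = -\tfrac{1}{T} \log \bigl( \tfrac{\Gamma(n/2+1)}{\pi^{n/2}} \mu(M) \bigr)$, so the whole theorem is equivalent to showing
\[
\mu(B(0,1)) \prod_{i=1}^n \min(1, \kappa_i^{-1}) \;\leq\; \mu(M) \;\leq\; 2^n \prod_{i=1}^n \min(1, \kappa_i^{-1}),
\]
where $\kappa_i := e^{(\lambda_i(t_0,x_0,T) - \alpha) T}$, and one recognises $-\log \min(1, \kappa_i^{-1}) = (\lambda_i(t_0,x_0,T) - \alpha)^+ T$.

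To produce the sandwich I would first pass to orthonormal coordinates $y = U^\top x$, where $U$ is the orthogonal matrix from the singular value decomposition $V \Lambda U^\top = D_{x_0}\phi(t_0+T,t_0)$ used in Corollary \ref{thm-ellipsoid}. In these coordinates $B(0,1)$ stays the unit ball and $E(D_{x_0}\phi(t_0+T,t_0) e^{-\alpha T})$ becomes the axis-aligned ellipsoid $\{ \sum_i \kappa_i^2 y_i^2 \leq 1 \}$. Setting $r_i := \min(1, \kappa_i^{-1})$, the inner bound is realised by the axis-aligned ellipsoid $E^\ast := \{ \sum_i (y_i/r_i)^2 \leq 1 \}$: the inequalities $r_i \leq 1$ and $r_i \kappa_i \leq 1$ yield $E^\ast \subseteq B(0,1) \cap E(D_{x_0}\phi(t_0+T,t_0) e^{-\alpha T})$, and $\mu(E^\ast) = \mu(B(0,1)) \prod_i r_i$. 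For the outer bound, any $y \in M$ satisfies $|y_i| \leq 1$ (from the ball constraint) and $\kappa_i |y_i| \leq 1$ (by keeping only the $i$-th term in the ellipsoid constraint), hence $|y_i| \leq r_i$, so $M$ is contained in the axis-aligned box of volume $2^n \prod_i r_i$.

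Substituting the two-sided volume bound into the formula for $h_{t_0}^{\alpha}(x_0)$ and using the identity $-\log r_i = (\lambda_i(t_0,x_0,T) - \alpha)^+ T$ delivers the left inequality of \eqref{approx} directly, while the right inequality absorbs exactly the constant $\tfrac{1}{T}\log\bigl( \tfrac{2^n \Gamma(n/2+1)}{\pi^{n/2}} \bigr)$ appearing there. The conceptual core of the argument — and the only step beyond routine bookkeeping — is the observation that both the inscribed ellipsoid and the circumscribed box can be made to carry the \emph{same} product $\prod_i r_i$, so the residual gap in \eqref{approx} is a purely dimensional constant independent of the dynamics and, in particular, of order $1/T$, which explains why the finite-time Pesin's formula becomes sharp as $T \to \infty$. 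I do not anticipate any real technical obstacle beyond tracking the orthogonal change of coordinates and the two elementary geometric inclusions above.
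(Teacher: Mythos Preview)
Your proposal is correct and follows essentially the same approach as the paper: both reduce via Corollary~\ref{thm-ellipsoid} to sandwiching $\mu(M)$ between an inscribed ellipsoid with semi-axes $\min\{1,\kappa_i^{-1}\}$ and a circumscribed box with half-side-lengths $\min\{1,\kappa_i^{-1}\}$. Your version is slightly more explicit in passing to the orthonormal coordinates $y = U^\top x$ and verifying the two inclusions coordinatewise, but the geometric content is identical.
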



\begin{proof}
Using the fact that the ellipse $E(I_{n \times n})$ of the identity matrix $I_{n \times n}$ equals $B(0,1)$, formula \eqref{FTElinear2} in Corollary \ref{thm-ellipsoid} implies
\[
  h_{t_0}^{\alpha}(x_0)
  =
  - \frac{1}{T} \log \frac{\Gamma(\frac{n}{2} + 1)}{\pi^{\frac{n}{2}}}
  \mu (M)
  \qquad \text{with }
  M := B(0,1) \cap E\big(D_{x_0}\phi(t_0+T,t_0) e^{-\alpha T}\big)
  .
\]
The semi-principle axes of the ellipsoid $E\big(D_{x_0}\phi(t_0+T,t_0) e^{-\alpha T}\big)$ have lengths
\[
  \ell_i
  :=
  \Lambda_i(t_0,x_0,T)^{-1} e^{\alpha T}
  =
  \exp(-(\lambda_i(t_0,x_0,T) - \alpha)T),
  \qquad i=1,\dots,n
  .
\]
Then $M$ contains an ellipsoid $E$ which has the same semi-principal axes as $E\big(D_{x_0}\phi(t_0+T,t_0) e^{-\alpha T}\big)$ but with lengths $\min\{1,\ell_i\}$, and is contained in a cube $C$ with side lengths $2 \min\{1,\ell_i\}$, $i=1,\dots,n$. With the volume formulas $\mu(E) = \prod_{i=1}^n \min\{1,\ell_i\} \mu(B(0,1))$, $\mu(C) = 2^n \prod_{i=1}^n \min\{1,\ell_i\}$ and $\min\{1,\ell_i\} = \exp(-(\lambda_i(t_0,x_0,T) - \alpha \big)^+ T)$, the inclusion $E \subseteq M \subseteq C$ implies $-\frac{1}{T} \log\frac{\mu(E)}{\mu(B(0,1))} \geq -\frac{1}{T} \log\frac{\mu(M)}{\mu(B(0,1))} \geq -\frac{1}{T} \log\frac{\mu(C)}{\mu(B(0,1))}$ and hence
\[
  \sum_{i=1}^n \big(\lambda_i(t_0,x_0,T) -\alpha\big)^+
  \geq
  h_{t_0}^{\alpha}(x_0)
  \geq
  \sum_{i=1}^n \big(\lambda_i(t_0,x_0,T) - \alpha\big)^+
  - \tfrac{n \log 2 + \log \alpha(\frac{n}{2}+1) - \frac{n}{2} \log \pi}{T}
  ,
\]
proving \eqref{approx}.
\end{proof}




\section{Lagrangian coherent structures based on FTME}


A commonly used tool for detection of candidates for Lagrangian coherent structures (LCS) has been the largest finite-time Lyapunov exponent (FTLE) field, whose ridges appear to mark repelling LCS (cp.\ Theorem \ref{thmFTLE} and \cite{haller1, haller1b, haller2}).

Since FTME can be also expressed in terms of FTLEs (cp.\ formulas \eqref{equ-ellipsoid}, \eqref{entropy1d}
and Proposition \ref{Pesin2D}), the ridges of an FTME field are capable of detecting candidates for LCS equally well (cp.\ Remark \ref{FTMEvsFTLE}). To illustrate this relation again in a more general context for the FTME $h^{\alpha}_{t_0}(x_0)$ with a weight $\alpha$ which depends on $t_0$ and $x_0$, let $\phi$ be a $C^2$ NDS on $X \subset J \times \R^n$ over a two-point interval $J = \{t_0, t_0 + T\}$ for some $t_0 \in \R$ and $T>0$. Define the \emph{directional stretching rate} of $\phi$ on $J$ at $(t_0,x_0)$ in direction $v \in \R^n\setminus\{0\}$ as
\begin{equation}\label{stretching-rate}
  \alpha(x_0,T,v)
  :=
  \frac{1}{T} \log \frac{\|\Phi_{x_0}(t_0 + T, t_0)v\|}{\|v\|}
\end{equation}
where $\Phi_{x_0}(t_0 + T, t_0) = D \phi(t_0 + T,t_0)x_0$. Note that with the singular vectors $\xi_i(t_0,x_0,T)$ of $\Phi_{x_0}(t_0 + T, t_0)$ we have
\[
  \alpha\big(x_0,T,\xi_i(t_0,x_0,T)\big) = \lambda_i(t_0,x_0,T)
  \quad \text{and} \quad
  \alpha(x_0,T,v) \in \big[\lambda_n(t_0,x_0,T), \lambda_1(t_0,x_0,T)\big]
  .
\]
Therefore, as a consequence, for $v \in \R^n\setminus\{0\}$
\begin{equation}
  0
  =
  h^{\lambda_1(t_0,x_0,T)}_{t_0}(x_0)
  \leq
  h^{\alpha(x_0,T,v)}_{t_0}(x_0)
  \leq
  h^{\lambda_n(t_0,x_0,T)}_{t_0}(x_0)
  .
\end{equation}
If we compute now the FTME $h_{t_0}^\alpha(x_0)$ similarly as in the proof of Theorem \ref{thm2} and choose for each $x_0 \in X(t_0)$ as the exponential weight factor $\alpha$ the directional stretching rate in direction $\xi_n(t_0,x_0,T)$ then we get
\begin{equation*}\label{smallestLE}
  h_{t_0}^{\lambda_n(t_0,x_0,T)}(x_0)
  =
  \sum_{i=1}^{n} \big( \lambda_i(t_0,x_0,T) - \lambda_n(t_0,x_0,T) \big)
  =
  \Big(
    \sum_{i=1}^n \lambda_i(t_0,x_0,T)
  \Big)
  - n \lambda_n(t_0,x_0)
  ,
\end{equation*}
because the ellipsoid $E\big(D_{x_0}\phi(t_0+T,t_0) e^{-\lambda_n(t_0,x_0,T) T}\big)$ is contained in $B(0,1)$, and formula $\eqref{equ-ellipsoid}$ yields the result. If e.g.\ $\phi$ is a two-dimensional incompressible system, i.e.\ $n=2$ and $\lambda_1(t_0,x_0,T) + \lambda_2(t_0,x_0,T) = 0$, then the weighted FTME field $x_0 \mapsto h_{t_0}^{\lambda_2(t_0,x_0,T)}(x_0) = 2 \lambda_1(t_0,x_0,T)$ is therefore proportional to the FTLE field and the search for ridge-like structures of this weighted FTME field yields LCS in the sense of Theorem \ref{thmFTLE}.

However, one major drawback of LCS based on FTLE is its inability to detect coherent structures for linear systems. This can be easily seen from the fact that for a linear differential equation $\dot x = A(t)x$ the corresponding NDS $\phi(t_0+T,t_0)$ equals its linearization $\Phi(t_0+T,t_0) := D \phi(t_0+T,t_0)x_0$ which is independent of $x_0$. Consequently the FTLEs $\lambda_i(t_0,x_0,T) \equiv \lambda_i(t_0,T)$ are also independent of $x_0$. There might exist weak LCS in the sense of Theorem \ref{thmFTLE} but no LCS, since the FTLE field is constant and therefore condition (ii)2 of Theorem \ref{thmFTLE} is not satisfied (for a discussion of limitations of LCS based on FTLE see \cite{ross}). In \cite[Section 9]{haller1} Haller developed a notion of constrained LCS for autonomous systems. It would be interesting to investigate whether constrained LCS for autonomous systems are capable of detecting classical stable and unstable manifolds of equilibria (cf.\ Theorem \ref{autonomentropy} below).

In this section we introduce and discuss LCS based on FTME for autonomous differential equations
\begin{equation}\label{autonom1}
  \dot{x} = f(x),
  \qquad
  t \in [0,T], x \in U \subseteq \R^n,
\end{equation}
with a $C^2$ function $f : U \rightarrow \R^n$ and $T > 0$. Assume that for all $x_0$ in some $X(0) \subseteq U$ the solution $x(\cdot,x_0)$ which starts at time $0$ in $x_0$ exists on the whole interval $[0,T]$ and define $X(T) := x(T,X(0))$. Then $\phi(t,s,\cdot) : X(s) \rightarrow X(t)$, $\phi(t,s,x_0) := x(t-s,x_0)$, is an NDS on $X := \{(t,x) \in J \times U : x \in X(t)\}$ over the two-point set $J := \{0,T\}$ (cp.\ Example \ref{example-NDS}(b)). Since $\phi(t,s,\cdot)$ depends only on the difference $t-s$ we write instead for simplicity $\phi(t-s,\cdot)$, and similarly for its linearization $\Phi_{x_0}(t-s)$ (cp.\ Example \ref{example-NDS}(c)). We write $h^\alpha(x_0)$ for $h_{t_0}^\alpha(x_0)$ if $t_0 = 0$.

To compute LCS based on FTME, we study $h^{\alpha(x_0,T,f(x_0))}(x_0)$ at each $x_0 \in X(0)$ and use as the exponential weight factor the directional stretching rate of $\phi$ in the direction of the vector field $f$
\begin{equation}\label{strechingrate2}
  \alpha(x_0,T,f(x_0))
  =
  \frac{1}{T} \log \frac{\|\Phi_{x_0}(T)f(x_0)\|}{\|f(x_0)\|}
  ,
\end{equation}
where we simplified the notation by omitting the initial time $t_0=0$ in \eqref{stretching-rate}. Note that with this approach we emphasize the direction of the vector field when it comes to measuring attraction and repulsion rates. In comparison, Haller \cite{haller1} measures growth rates in directions normal to potential LCS manifolds. In fact his concept of repulsion ratio is the quotiont of his repulsion rate and the maximum over all directions $v \neq 0$ of our directional stretching rate. Using the fact that $t \mapsto \Phi_{x_0}(t) f(x_0)$, as well as $t \mapsto f(\phi(t,x_0))$, are solutions to the same initial value problem $\dot x = Df(\phi(t,x_0))x$, $x(0) = f(x_0)$, it follows that $\Phi_{x_0}(t) f(x_0) = f(\varphi(t,x_0))$ for $t \in [0,T]$, $x_0 \in X(0)$, and hence the expression \eqref{strechingrate2} for the directional stretching rate in the direction of the vector field equals   $\alpha(x_0,T,f(x_0)) =   \frac{1}{T} \log \frac{\|f(\phi(T,x_0))\|}{\|f(x_0)\|}$.

We show now for two classes of examples that the weighted FTME field
\begin{equation}\label{FTMEfield}
  x \mapsto H(x) := h^{\alpha(x,T,f(x))}(x)
  \qquad \text{with} \quad
  \alpha(x,T,f(x)) = \frac{1}{T} \log \frac{\|f(\phi(T,x))\|}{\|f(x)\|}
\end{equation}
exhibits ridge and trough-like coherent structures which approach classical invariant manifolds as $T \to \infty$.
%
%
Figure \ref{fig:linear1} shows the weighted FTME field \eqref{FTMEfield} for the linear differential equation $\dot x_1 =  x_1-x_2$, $\dot x_2 = - x_2$, with stable manifold $W^s = \{(x_1,x_2) \in \R^2 : 2x_1 = x_2\}$ and unstable manifold $W^u = \{(x_1,x_2) \in \R^2 : x_2 = 0\}$. Figure \ref{fig:parabol1} shows the weighted FTME field \eqref{FTMEfield} for the nonlinear differential equation $\dot x_1 = - x_1$, $\dot x_2 = x_1^2 + x_2$. Its unstable manifold $W^u = \{ (x_1,x_2) \in \R^2 : x_1=0\}$ is the $x_2$-axis and its stable manifold $W^u = \{(x_1,x_2) \in \R^2 : x_2 = \frac{1}{3} x_1^2\}$ is a parabola.

Instead of developing a complete theory for ridge-like structures of weighted FTME fields in this section, we take advantage of the fact that for the specific classes of examples which we discuss in this section, the points $x$ in the ridge and trough-like structures of the weighted FTME field \eqref{FTMEfield} satisfy the condition $\nabla H(x) = 0$. Using Proposition \ref{Pesin2D}(ii) with the abbreviations $\kappa_1(x) := e^{(\lambda_1(x,T)-\alpha(x,T,f(x))) T}$ and
$\kappa_2(x) := e^{(\lambda_2(x,T)-\alpha(x,T,f(x))) T}$, and using the fact that $\alpha(x,T,f(x)) \in [\lambda_2(x,T), \lambda_1(x,T)]$ and hence $\kappa_1(x) > 1 > \kappa_2(x)$, we get $H(x) = h(\kappa_1(x), \kappa_2(x))$ with
$h(\kappa_1, \kappa_2) = - \frac{1}{T} \log \frac{2}{\pi} \left( \arccos \sqrt{\frac{\kappa_1^2-1}{\kappa_1^2 - \kappa_2^2}} + \frac{1}{\kappa_1 \kappa_2} \arccos \kappa_1 \sqrt{\frac{1-\kappa_2^2}{\kappa_1^2 - \kappa_2^2}}\right)$ and hence
\begin{eqnarray}\label{GradientEntropy}
  \nabla H(x)
  & = &
  \tfrac{\partial h}{\partial \kappa_1}\big(\kappa_1(x), \kappa_2(x)\big)
  \kappa_1(x) T \big( \nabla \lambda_1(x,T) - \nabla \alpha(x,T,f(x)) \big)
  \nonumber
\\
  & & {} +
  \tfrac{\partial h}{\partial \kappa_2}\big(\kappa_1(x), \kappa_2(x)\big)
  \kappa_2(x) T \big( \nabla \lambda_2(x,T) - \nabla \alpha(x,T,f(x)) \big)
  .
\end{eqnarray}
Note that in higher dimensions the weighted FTME field \eqref{FTMEfield} could fail to be $C^1$ for $x$ in a lower-dimensional subset (cp.\ Kato \cite{kato}).

We will present two examples below which both satisfy the assumption that $\nabla \big(\lambda_1(x,T) + \lambda_2(x,T)\big) = 0$. In this case, finding the zeros of $\nabla H(x)$ in \eqref{GradientEntropy} is equivalent to solving
\begin{eqnarray}\label{localextrema}
  & &
  \Big( \frac{\partial h}{\partial \kappa_1}\big(\kappa_1(x), \kappa_2(x)\big) \kappa_1(x)
  -
  \frac{\partial h}{\partial \kappa_2}\big(\kappa_1(x), \kappa_2(x)\big) \kappa_2(x) \Big)
  \nabla \lambda_1(x,T)
  \nonumber
\\
  & = &
  \Big(
  \frac{\partial h}{\partial \kappa_1}\big(\kappa_1(x), \kappa_2(x)\big) \kappa_1(x)
  +
  \frac{\partial h}{\partial \kappa_2}\big(\kappa_1(x), \kappa_2(x)\big) \kappa_2(x) \Big)
  \nabla \alpha(x,T,f(x))
  .
\end{eqnarray}

\begin{example}\label{exlinear}
Consider a two dimensional linear autonomous system
\begin{equation}\label{ex2Dlinear}
  \dot{x} = A x
\end{equation}
with a matrix $A \in \R^{2\times 2}$ which has two eigenvalues $\lambda_1 > 0 > \lambda_2$ and corresponding  eigenvectors $e_1, e_2$. The solution of the initial value problem \eqref{ex2Dlinear}, $x(0) = x_0$, is $\varphi(t,x_0) = e^{A t} x_0$, and the system has unstable and stable manifolds $W^u$ and $W^s$ which are two lines with directional vectors $e_1$ and $e_2$. The linearized solution $\Phi(t) = \Phi_{x_0}(t) := D_{x_0} \phi(t,x_0) = e^{A t}$ is independent of $x_0$. We consider \eqref{ex2Dlinear} for $t \in [0,T]$ for arbitrary $T>0$. It follows that the FTLE $\lambda_i(T) = \lambda_i(x_0,T)$ are also independent of $x_0$, i.e.\ $\nabla \lambda_i(x_0,T) = (0,0)$ for $i=1,2$. In particular the FTLE field $x_0 \mapsto \lambda_1(x_0,T)$ is constant and not capable of detecting any coherent structures such as stable or unstable manifolds.

\begin{figure}[ht]
\centering
\begin{minipage}[b]{0.45\linewidth}
\centerline{\includegraphics[width=10cm]{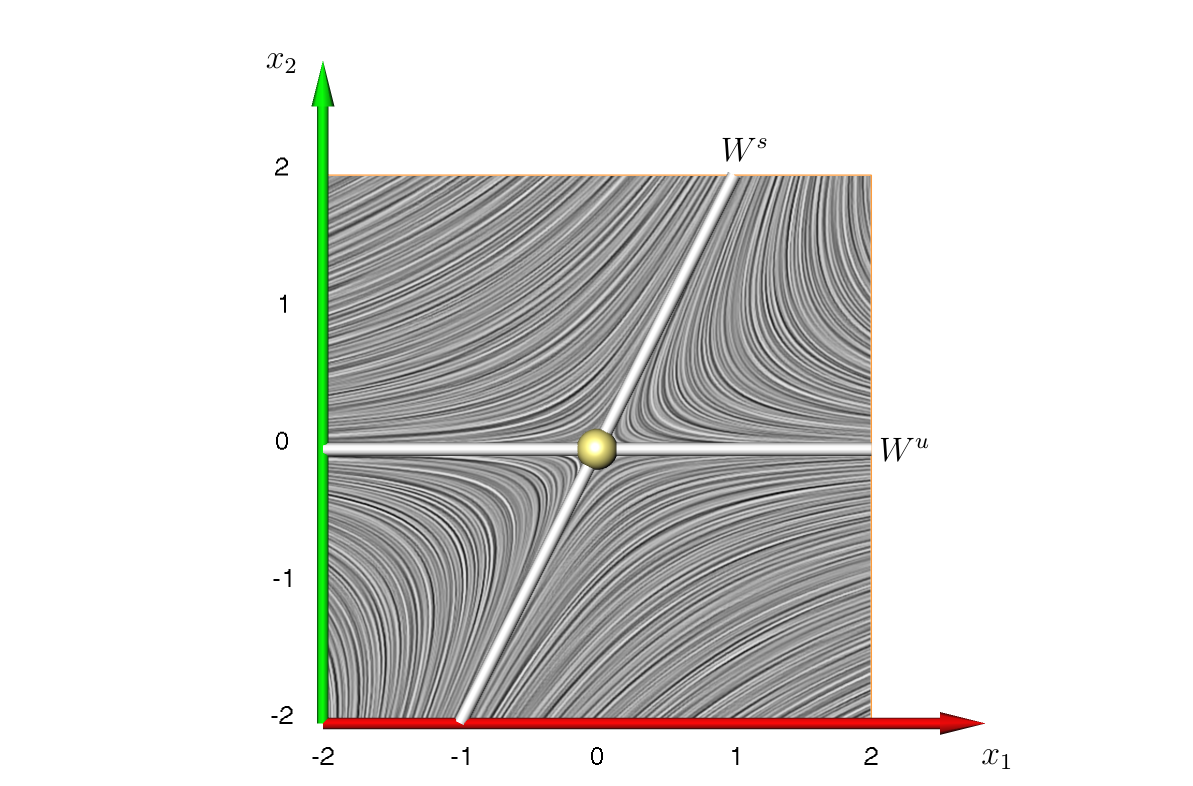}}
\caption{Vector field and invariant manifolds $W^s$ and $W^u$ for $\dot x_1 = x_1 - x_2$, $\dot x_2 = - x_2$.}
\label{fig:linear1}
\end{minipage}
\hfill
\begin{minipage}[b]{0.45\linewidth}
\centerline{\includegraphics[width=10cm]{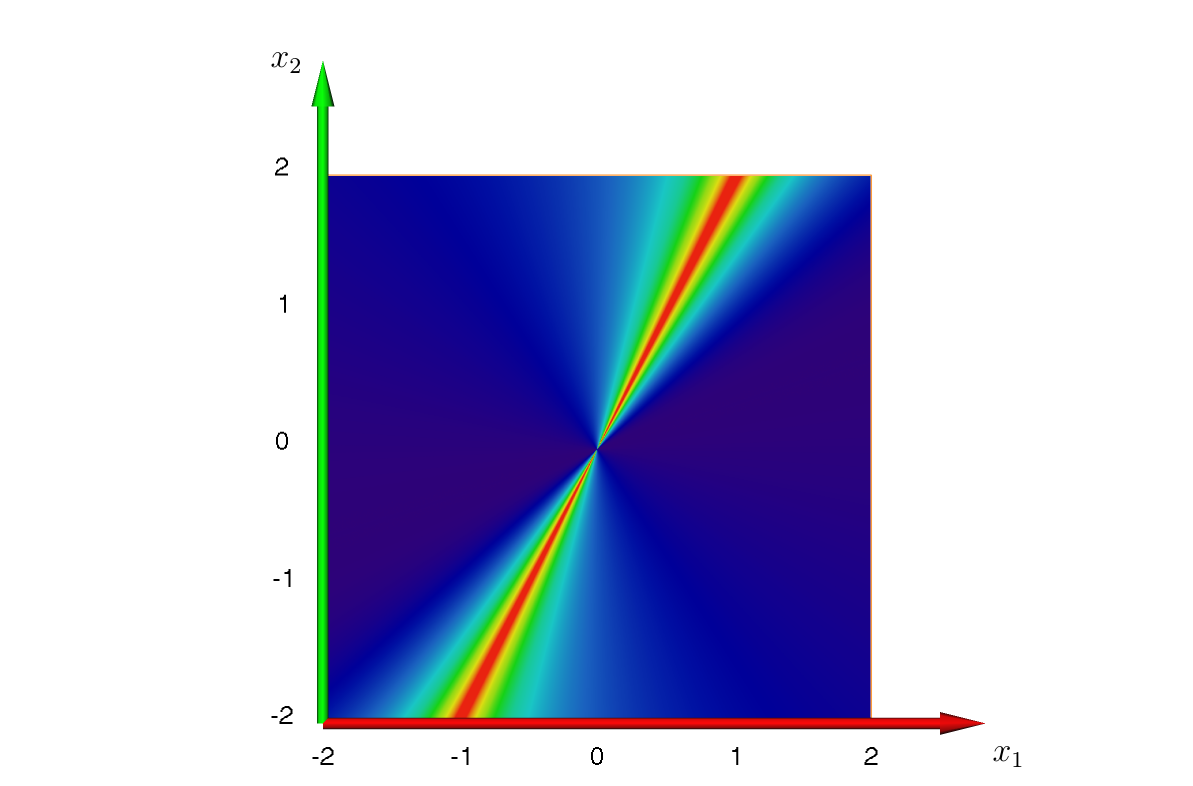}}
\caption{Weighted FTME field \eqref{FTMEfield} for $\dot x_1 = x_1 - x_2$, $\dot x_2 = - x_2$ (blue $\simeq 0$, red $\simeq 1$).}
\label{fig:linear2}
\end{minipage}
\end{figure}

The solutions $x_0$ of \eqref{localextrema} are given by the zeros of $\nabla \alpha(x_0)$, where $\alpha(x_0) = \alpha(x_0,T,Ax_0) = \frac{1}{T} \log \frac{\|\Phi(T)Ax_0\|}{\|Ax_0\|}$. It follows that
\begin{eqnarray*}
  0
  &=&
  \nabla \alpha(x_0)
  =
  \frac{1}{T} \frac{\|Ax_0\|}{\|\Phi(T)Ax_0\|} \nabla\Big(\frac{\|\Phi(T)Ax_0\|}{\|Ax_0\|}\Big)
\\
  &=&
  \frac{1}{T} \frac{1}{\|\Phi(T)Ax_0\|^2} A^{\rT} \Big( \Phi(T)^{\rT}\Phi(T) Ax_0 -
  \frac{\|\Phi(T)Ax_0\|^2}{\|Ax_0\|^2} Ax_0\Big)
\end{eqnarray*}
and hence $Ax_0$ is an eigenvector of $\Phi(T)^{\rT} \Phi(T)$. Let $U = (e_1 | e_2)$ denote the matrix whose column vectors are $e_1$ and $e_2$, thus $A = U \diag(\lambda_1,\lambda_2)  U^{-1}$. We then have
\begin{eqnarray*}
  \lambda_1(T)
  &=&
  \frac{1}{T} \log \|\Phi(T)\| = \frac{1}{T} \log \|U \diag(e^{\lambda_1T}, e^{\lambda_2 T}) U^{-1}\|
  ,
\\
  \lambda_2(T)
  &=&
  \frac{1}{T} \log \|\Phi(T)^{-1}\| = \frac{1}{T} \log \|U \diag(e^{-\lambda_1T}, e^{-\lambda_2 T}) U^{-1}\|
  .
\end{eqnarray*}
A direct computation shows that for $\eps>0$ and $T > \frac{\log \|U\| + \log \|U^{-1}\|}{\eps}$ the estimate
\begin{equation*}
  |\lambda_i(T) - \lambda_i|
  \leq
  \frac{1}{T} \big(\log \|U\| + \log \|U^{-1}\|\big) \leq \eps
  \qquad \text{for } i = 1,2
\end{equation*}
holds.

For $i=1,2$ let $v_i(T) = v_i(x_0,T)$ with $\|v_i(T)\| = 1$ denote the finite-time Lyapunov vectors corresponding to $\lambda_i(T)$, i.e.\ $\Phi(T)^{\rT} \Phi(T) v_i(T) = \lambda_i(T) v_i(T)$. Since $e_2 = \langle e_2, v_1(T) \rangle v_1(T) + \langle e_2, v_2(T) \rangle v_2(T)$, we get
\begin{eqnarray*}
  e^{2 \lambda_2 T}
  &=&
  \|\Phi(T) e_2\|^2 = \langle e_2, \Phi(T)^{\rT} \Phi(T) e_2\rangle
\\
  &=&
  \langle e_2, v_1(T)\rangle^2 e^{2 \lambda_1(T) T} +
  \langle e_2, v_2(T)\rangle^2 e^{2 \lambda_2(T) T}.
\end{eqnarray*}
Hence $|\langle e_2,v_1(T)\rangle| \leq e^{(\lambda_2 - \lambda_1(T)) T} \leq e^{(\lambda_2 - \lambda_1 + \eps) T}$ or $|\sin \angle (e_2,v_2(T))| \leq e^{(\lambda_2 - \lambda_1 + \eps) T}$.

Assume that $Ax_0 \parallel v_2(T)$, the case $Ax_0 \parallel v_1(T)$ is analog. Then $|\sin \angle (e_2, \frac{Ax_0}{\|Ax_0\|})| \leq e^{(\lambda_2 - \lambda_1 + \eps) T}$. Since $A e_2 = \lambda_2 e_2$, it follows that
$|\sin \angle (e_2,\frac{x_0}{\|x_0\|})| \leq e^{(\lambda_2 - \lambda_1 + \epsilon) T}$. In other words, the ridge and trough of the weighted FTME field $x_0 \mapsto H(x_0)$ of system \eqref{ex2Dlinear} are the two lines which are defined by the zeros of $\nabla H(x_0)$ and converge to the stable and unstable invariant manifolds for $T \to \infty$, see also the vector field in Figure \ref{fig:linear1} and the weighted FTME field of \eqref{ex2Dlinear} in Figure \ref{fig:linear2} for $A = {1 \; -1 \choose 0 \; -1}$.
\end{example}


\begin{example}\label{parabola}
For $\gamma, \beta \in \R$, $\gamma > 0$, consider the following family of autonomous differential equations
\begin{equation}\label{ex1}
  \begin{array}{rcl}
    \dot{x}_1 &=& -x_1,
  \\
    \dot{x}_2 &=& \beta x_1^2 + \gamma x_2.
  \end{array}
\end{equation}
Its solution for $t \in \R$ and $x_0 = (x_{01}, x_{02}) \in \R^2$ is given by
\[
  \phi(t,x_0) =
  \left\{
  \begin{array}{l}
    e^{-t}x_{01}
  \\
    \frac{\beta}{2+\gamma}x_{01}^2 (e^{\gamma t} - e^{-2t}) + e^{\gamma t} x_{02}
  \end{array}
  \right.
\]
and its linearization $\Phi_{x_0}(t) = D_{x_0} \phi(t,x_0)$ is
\begin{eqnarray*}
  \Phi_{x_0}(t)
  =
  \left(
    \begin{array}{ccc}
      e^{-t} & 0
    \\
      \frac{2\beta}{2+\gamma}x_{01}(e^{\gamma t} - e^{-2t}) & e^{\gamma t}
    \end{array}
  \right)
  =:
  \left(
    \begin{array}{ccc}
      a & 0
    \\
      b & c
    \end{array}
  \right)
  .
\end{eqnarray*}
System \eqref{ex1} has an equilibrium at the origin with invariant stable and unstable manifolds
\begin{eqnarray*}
  W^s
  &=&
  \big\{(x_{01},x_{02}) \in \R^2 : x_{02} + \tfrac{\beta}{2+\gamma} x_{01}^2 = 0\big\},
\\
  W^u
  &=&
  \big\{(x_{01},x_{02}) \in \R^2 : x_{01} = 0 \big\}.
\end{eqnarray*}
We consider \eqref{ex1} for $t \in [0,T]$ for an arbitrary $T>0$. Its FTLEs $\lambda_i(x_0,T)$ for $i=1,2$ are
\[
  \lambda_{i}(x_0,T)
  =
  \frac{1}{2 T} \log \Lambda_{i}(x_0,T)
  \quad \text{with }
  \Lambda_{i}(x_0,T)
  =
  \frac{a^2 + b^2 + c^2 \pm [(a^2 + b^2 + c^2)^2 - 4]^{\frac{1}{2}}}{2}.
\]
Thus $\lambda_1(x_0,T)$ depends only on $x_{01}$, i.e.\ $\frac{\partial \lambda_1(x_0,T)}{\partial x_{02}} = 0$. By  \eqref{localextrema}, the zeros of $\nabla H(x_0)$ are the solutions to
\begin{eqnarray}\label{localextrema15}
  & &
  \Big( \frac{\partial h}{\partial \kappa_1}\big(\kappa_1(x), \kappa_2(x)\big) \kappa_1(x)
  -
  \frac{\partial h}{\partial \kappa_2}\big(\kappa_1(x), \kappa_2(x)\big) \kappa_2(x) \Big)
  \frac{\big(\frac{\partial \Lambda_1(x_0,T)}{\partial x_{01}},0\big)}{2T \Lambda_1(x_0,T)}
  \nonumber
\\
  & = &
  \Big(
  \frac{\partial h}{\partial \alpha_1}\big(\alpha_1(x), \alpha_2(x)\big) \alpha_1(x)
  +
  \frac{\partial h}{\partial \alpha_2}\big(\alpha_1(x), \alpha_2(x)\big) \alpha_2(x) \Big)
  \frac{\big(\frac{\partial g(x_0)}{\partial x_{01}}, \frac{\partial g(x_0)}{\partial x_{02}}\big)}{2T g(x_0)}
  ,
\end{eqnarray}
where
\[
  g(x_0)
  =
  \frac{\|\Phi_{x_0}(T) f(x_0)\|^2}{\|f(x_0)\|^2}
  =
  \frac{e^{-2T}x_{01}^2 +
  \big[\gamma e^{\gamma T}x_{02} +
  \frac{\gamma \beta}{2 + \gamma} (e^{\gamma T} - e^{-2T})x_{01}^2 + \beta e^{-2T} x_{01}^2\big]^2}{x_{01}^2 +
  (\gamma x_{02} + \beta x_{01}^2)^2}
  .
\]
Equation \eqref{localextrema15} implies $\frac{\partial g}{\partial x_{02}} = 0$, which yields
\begin{eqnarray}\label{parabol3}
  & &
  2 \gamma e^{\gamma T} \big[\gamma e^{\gamma T}x_{02} +
  \tfrac{\gamma \beta}{2 + \gamma} (e^{\gamma T} -
  e^{-2T})x_{01}^2 +
  \beta e^{-2T} x_{01}^2\big] \big[x_{01}^2 + (\gamma x_{02} + \beta x_{01}^2)^2\big]
\\
  \nonumber
 & = &
 2 \gamma (\gamma x_{02} + \beta x_{01}^2) \big\{e^{-2T}x_{01}^2 +
 \big[\gamma e^{\gamma T}x_{02} + \tfrac{\gamma \beta}{2 +
 \gamma} (e^{\gamma T} - e^{-2T})x_{01}^2 + \beta e^{-2T} x_{01}^2\big]^2\big\}
 .
\end{eqnarray}
Dividing both sides of \eqref{parabol3} by $e^{2\gamma T}$, we get
\begin{equation}\label{parabol4}
  x_{01}^2 \big(x_{02} +
  \tfrac{\beta}{2+\gamma} x_{01}^2\big)
  \big(x_{02} + \tfrac{\beta}{\gamma}x_{01}^2 +
  \tfrac{2+\gamma}{2 \gamma \beta}\big)
  =
  e^{-(1+2\gamma)T} h(x_{01},x_{02}),
\end{equation}
where $h(x_{10},x_{20})$ is a polynomial in $x_{10}$ and $x_{20}$. Since the right hand side of \eqref{parabol4} tends to $0$ as $T \to \infty$, the solutions $x_0 = (x_{01}, x_{02})$ of equation \eqref{parabol4} satisfies either
$x_{01} \approx 0$ or $x_{02} + \frac{\beta}{2+\gamma} x_{01}^2 \approx 0$ or $x_{02} + \frac{\beta}{\gamma}x_{01}^2 + \frac{2+\gamma}{2 \gamma \beta} \approx 0$ for $T$ large. Finally, by solving the first component of \eqref{localextrema15}, we conclude that the zeros $x_0$ of $\nabla H(x_0)$ satisfy either $x_{01} \to 0$ or $x_{02} + \frac{\beta}{2+\gamma} x_{01}^2 \to 0$ for $T \to \infty$, proving that the rigde and trough-like structures of the weighted FTME field $x_0 \mapsto H(x_0)$ are finite-time approximations of the stable and unstable manifold, see also the vector field and weighted FTME field of \eqref{parabol4} for $\gamma = \beta = 1$ in Figures \ref{fig:parabol1} and \ref{fig:FTME}.

\begin{figure}[ht]
\centering
\begin{minipage}[b]{0.45\linewidth}
\centerline{\includegraphics[width=10cm]{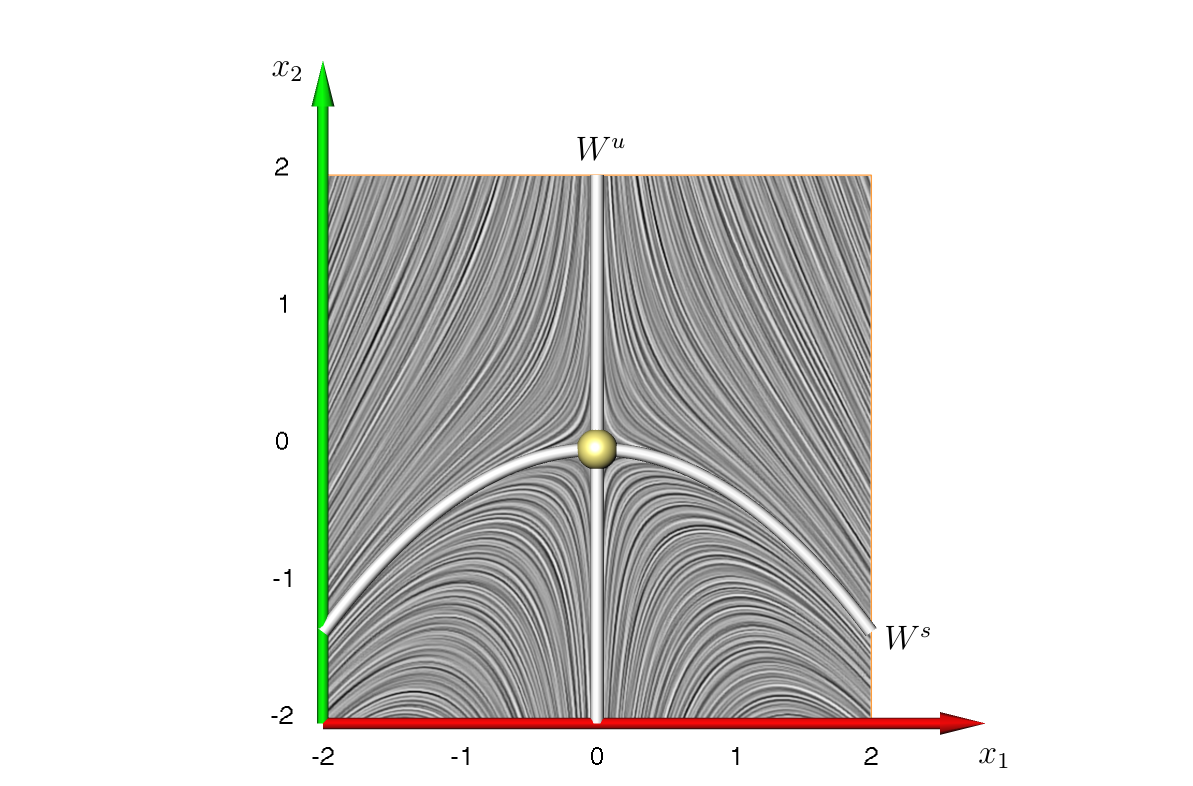}}
\caption{Vector field and invariant manifolds $W^s$ and $W^u$ for $\dot x_1 = - x_1$, $\dot x_2 = x_1^2 + x_2$.}
\label{fig:parabol1}
\end{minipage}
\hfill
\begin{minipage}[b]{0.45\linewidth}
\centerline{\includegraphics[width=10cm]{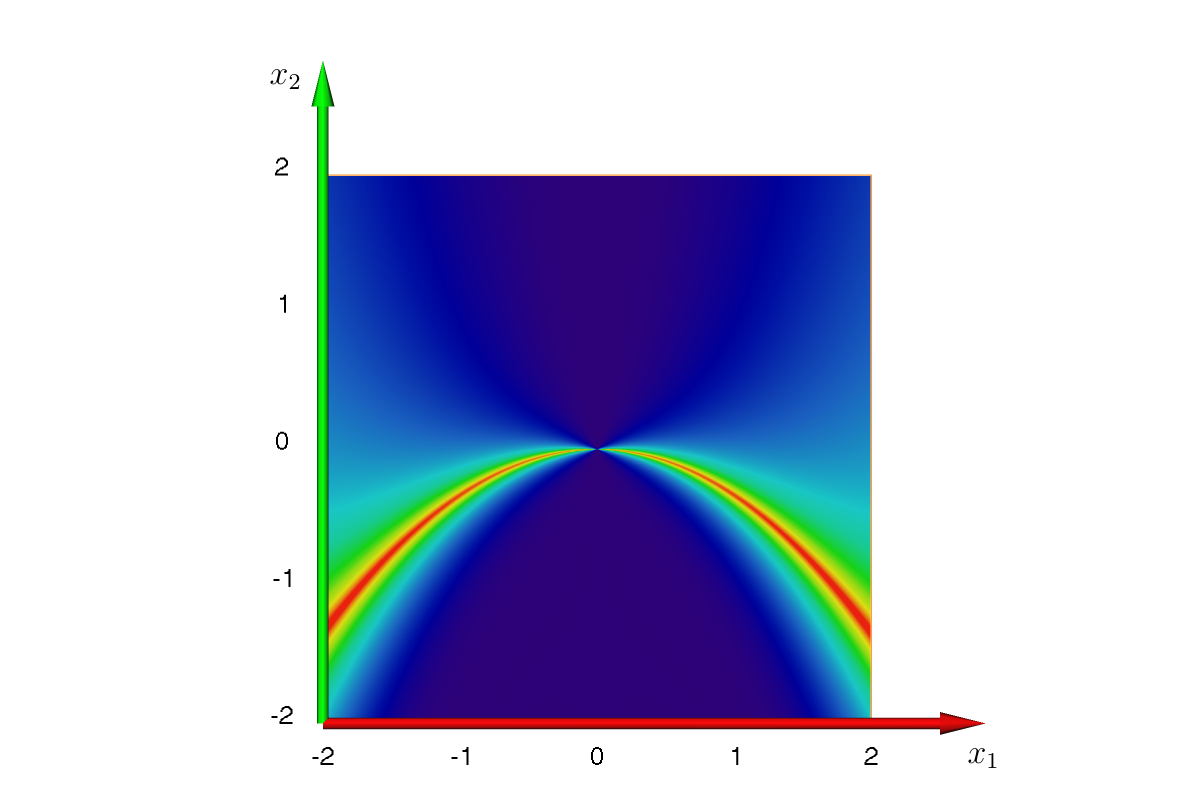}}
\caption{Weighted FTME field \eqref{FTMEfield} for $\dot x_1 = - x_1$, $\dot x_2 = x_1^2 + x_2$ with $T = 2$ (blue $\simeq 0$, red $\simeq 2.25$).}
\label{fig:FTME}
\end{minipage}
\end{figure}



As can be seen in Figures \ref{fig:FTLE1} and \ref{fig:FTLE2}, the forward and backward FTLE fields are not capable of detecting the stable manifold of \eqref{parabol4} for $\gamma = \beta = 1$. In fact, a smooth compact curve $\cM(t) \subset \R^2$ at time $t_0$ needs to satisfy conditions 2 and 3 of Theorem \ref{thmFTLE}(i) in order to qualify as a candidate for a weak LCS. This is equivalent to
$\nabla \lambda_1(x_0,T)^{\rT} \parallel T_{x_0}M(t_0)$. Since $\frac{\partial \lambda_1(x_0,T)}{\partial x_{02}} = 0$, this is equivalent to $T_{x_0}\cM(t_0) \parallel (1,0)^{\rT}$. Hence the possible repelling LCS candidates can only be lines which are parallel to the $x_{01}$ axis, in constrast to the stable manifold $W^s = \big\{(x_{01},x_{02}) \in \R^2 : x_{02} + \tfrac{\beta}{2+\gamma} x_{01}^2 = 0\big\}$ of \eqref{parabol4} which is a parabola.

\begin{figure}[ht!]
\centering
\begin{minipage}[b]{0.45\linewidth}
\centerline{\includegraphics[width=10cm]{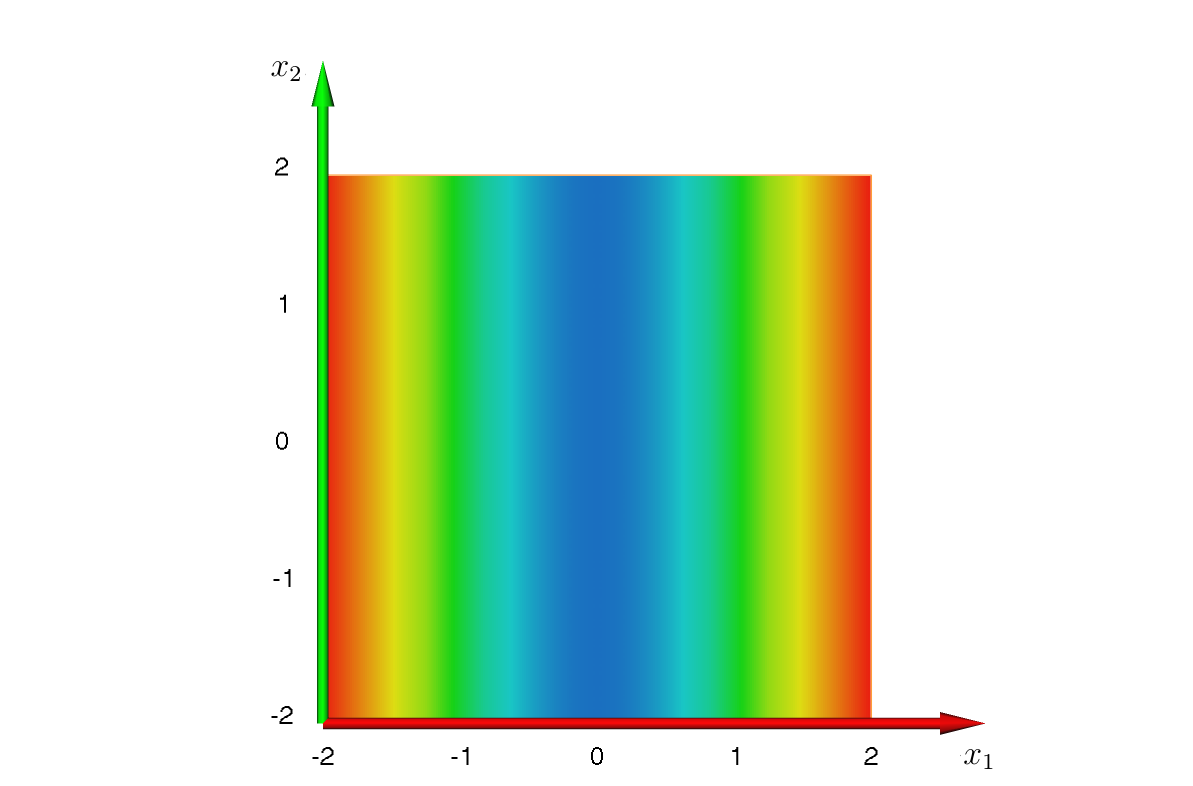}}
\caption{Forward FTLE field $x_0 \mapsto \lambda_1(x_0,T)$ for $\dot x_1 = - x_1$, $\dot x_2 = x_1^2 + x_2$ with $T=2$ (blue $\simeq 0.9$, red $\simeq 1.25$).}
\label{fig:FTLE1}
\end{minipage}
\hfill
\begin{minipage}[b]{0.45\linewidth}
\centerline{\includegraphics[width=10cm]{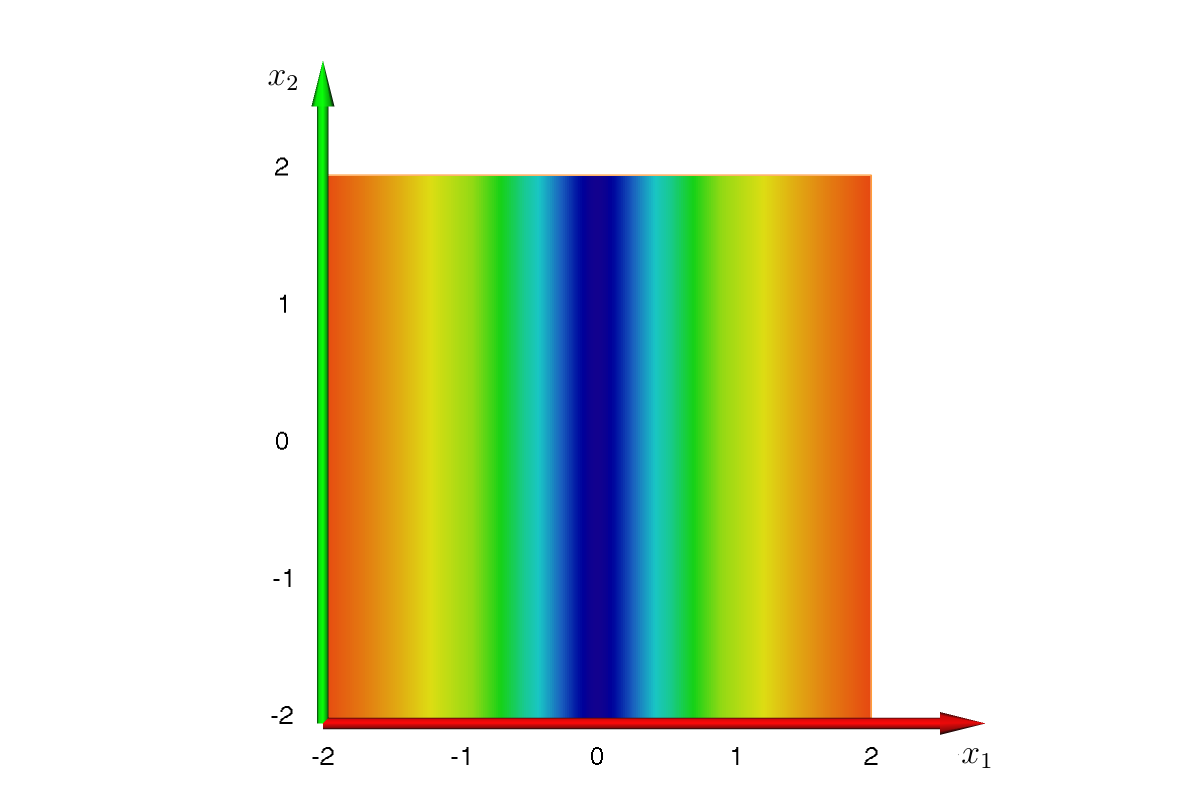}}
\caption{Backward FTLE field for $\dot x_1 = - x_1$, $\dot x_2 = x_1^2 + x_2$ on $[-2,0]$ (blue $\simeq 0.9$, red $\simeq 2.2$).}
\label{fig:FTLE2}
\end{minipage}
\end{figure}

\end{example}

The two examples \ref{exlinear} and \ref{parabola} illustrate that ridge and trough-like structures of the weighted FTME field approximate classical stable and unstable manifolds. In the remainder of this section we show that also for arbitrary two-dimensional systems \eqref{autonom1} the weighted FTME field is capable of detecting invariant manifolds in the vicinity of equilibria.
The following preparatory lemma provides an estimate for the stretching rate \eqref{strechingrate2} near an isolated equilibrium $x^*$.


\begin{lemma}[Directional stretching rate in direction of vector field close to equilibrium]\label{lemma1}
Assume that \eqref{autonom1} has an isolated equilibrium $x^*$. Then its directional stretching rate $\alpha(x_0) := \alpha(x_0,T,f(x_0))$ for $x_0 \neq x^*$ can be approximated in the following sense
\begin{equation}\label{gammaAprox}
  \lim_{x_0 \to x^*}
  \Big|\alpha(x_0) - \frac{1}{T} \log \frac{\|\Phi_{x^*}(T)(x_0-x^*)\|}{\|x_0-x^*\|}\Big|
  \leq
  \frac{1}{T} \big(\log \|D_xf(x^*)\| + \log \|D_xf(x^*)^{-1}\|\big)
  .
\end{equation}
\end{lemma}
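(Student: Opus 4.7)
The plan is to reduce the nonlinear directional stretching rate $\alpha(x_0)$ to its linearization at $x^*$ by a first-order Taylor expansion of $f$ and of the flow, and then exploit the elementary bound $\|v\|/\|A^{-1}\|\leq\|Av\|\leq\|A\|\|v\|$ to trade the factor $A:=D_xf(x^*)$ that appears in numerator and denominator for a multiplicative error controlled by $\|A\|\,\|A^{-1}\|$. Implicit in \eqref{gammaAprox} is that $A$ is invertible; otherwise the right-hand side is $+\infty$ and there is nothing to prove.

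\emph{Step 1 (Taylor expansion).} Since $f(x^*)=0$ and $\phi(T,x^*)=x^*$, for $x_0$ near $x^*$,
\[
  f(y) = A(y-x^*) + \rho_1(y), \qquad \phi(T,x_0)-x^* = \Phi_{x^*}(T)(x_0-x^*) + \rho_2(x_0),
\]
with $\rho_1(y)=o(\|y-x^*\|)$ and $\rho_2(x_0)=o(\|x_0-x^*\|)$. Substituting the second into the first and using continuity of $\phi(T,\cdot)$ at $x^*$ yields
\[
  f(\phi(T,x_0)) = A\,\Phi_{x^*}(T)(x_0-x^*) + o(\|x_0-x^*\|) \quad\text{as } x_0\to x^*.
\]

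\emph{Step 2 (Asymptotics of the ratio).} Because $A$ is invertible, both $\|A(x_0-x^*)\|$ and $\|A\,\Phi_{x^*}(T)(x_0-x^*)\|$ are $\geq c\|x_0-x^*\|$ for some constant $c>0$ (depending on $T$). Hence the Taylor remainders are of smaller order than the leading terms in both numerator and denominator of the quotient defining $\alpha(x_0)$, and
\[
  \alpha(x_0) - \frac{1}{T}\log\frac{\|A\,\Phi_{x^*}(T)(x_0-x^*)\|}{\|A(x_0-x^*)\|} \longrightarrow 0 \quad\text{as } x_0\to x^*.
\]

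\emph{Step 3 (Norm trade-off).} For every nonzero $v\in\R^n$, $\|v\|/\|A^{-1}\|\leq\|Av\|\leq\|A\|\|v\|$. Applying this inequality separately with $v$ and with $\Phi_{x^*}(T)v$ yields
\[
  \frac{1}{\|A\|\|A^{-1}\|}\cdot\frac{\|\Phi_{x^*}(T)v\|}{\|v\|}
  \leq \frac{\|A\,\Phi_{x^*}(T)v\|}{\|Av\|}
  \leq \|A\|\|A^{-1}\|\cdot\frac{\|\Phi_{x^*}(T)v\|}{\|v\|},
\]
whence, after taking logarithms and dividing by $T$,
\[
  \left|\frac{1}{T}\log\frac{\|A\,\Phi_{x^*}(T)v\|}{\|Av\|} - \frac{1}{T}\log\frac{\|\Phi_{x^*}(T)v\|}{\|v\|}\right|
  \leq \frac{\log\|A\|+\log\|A^{-1}\|}{T}.
\]
Combining this uniform bound with the $o(1)$ estimate of Step~2 through the triangle inequality gives \eqref{gammaAprox}.

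The only real obstacle is bookkeeping: the $o(\|x_0-x^*\|)$ remainders must be divided by $\|A(x_0-x^*)\|$ rather than by $\|x_0-x^*\|$, which is possible only because $A$ is invertible. The appearance of $\|A^{-1}\|$ on the right of \eqref{gammaAprox} is intrinsic and not an artifact of the proof: the linear stretching rate measured in the $\|\cdot\|_A$-norm and in the Euclidean norm can differ by exactly this factor, and a finer Taylor expansion cannot close that gap over a finite window~$[0,T]$.
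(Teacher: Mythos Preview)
Your proof is correct and follows essentially the same route as the paper's. Both arguments first reduce $\alpha(x_0)$ to the linearized ratio $\tfrac{1}{T}\log\frac{\|A\,\Phi_{x^*}(T)(x_0-x^*)\|}{\|A(x_0-x^*)\|}$ up to an $o(1)$ error, and then apply the elementary norm bounds $\|v\|/\|A^{-1}\|\le\|Av\|\le\|A\|\,\|v\|$ to obtain \eqref{gammaAprox}; your Step~3 and the paper's estimate \eqref{eq08} are identical.

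The only minor difference is in Step~1/2: the paper works from $\alpha(x_0)=\tfrac{1}{T}\log\frac{\|\Phi_{x_0}(T)f(x_0)\|}{\|f(x_0)\|}$, Taylor-expands $\Phi_{x_0}$ and $f$ separately to reach $\tfrac{\|\Phi_{x^*}(T)\,A(x_0-x^*)\|}{\|A(x_0-x^*)\|}$, and then invokes the commutation $\Phi_{x^*}(T)A=e^{AT}A=Ae^{AT}=A\,\Phi_{x^*}(T)$ before the norm trade-off. You instead start from the equivalent form $\alpha(x_0)=\tfrac{1}{T}\log\frac{\|f(\phi(T,x_0))\|}{\|f(x_0)\|}$ and Taylor-expand $f$ and $\phi(T,\cdot)$, which lands you directly on $A\,\Phi_{x^*}(T)(x_0-x^*)$ in the numerator and makes the commutation step unnecessary. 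This is a mild streamlining, not a different idea.
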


\begin{proof}
To estimate the stretching rate, observe that for $x_0 \ne x^*$ with $f(x_0) \neq 0$
\begin{eqnarray*}
  \alpha(x_0)
  &=&
  \frac{1}{T} \log \frac{\|\Phi_{x_0}(T)f(x_0)\|}{\|f(x_0)\|}
\\
  &=&
  \frac{1}{T} \log \Big \| \frac{\Phi_{x^*}(T) f(x_0) }{\|f(x_0)\|}
  +\frac{ [\Phi_{x_0}(T) - \Phi_{x^*}(T)]f(x_0)}{\|f(x_0)\|} \Big\|
\\
  &=&
  \frac{1}{T} \log \Big \| \frac{\Phi_{x^*}(T) f(x_0) }{\|f(x_0)\|}
  + g(T,x_0-x^*)\frac{f(x_0)}{\|f(x_0)\|} \Big\|
\\
  &=&
  \frac{1}{T} \log \Big \| \frac{\Phi_{x^*}(T)[f(x^*) + D_xf(x^*)(x_0-x^*) + h(T,x_0-x^*)(x_0-x^*)]}
  {\|f(x^*) + D_xf(x^*)(x_0-x^*) + h(T,x_0-x^*)(x_0-x^*)\|}
\\
  & &
  {} + g(T,x_0-x^*)\frac{f(x_0)}{\|f(x_0)\|} \Big\|
\\
  &=&
  \frac{1}{T} \log \Big \| \frac{\Phi_{x^*}(T)[D_xf(x^*)(x_0-x^*) + h(T,x_0-x^*)(x_0-x^*)]}
  {\|D_xf(x^*)(x_0-x^*) + h(T,x_0-x^*)(x_0-x^*)\|}
\\
  & &
  {} + g(T,x_0-x^*)\frac{f(x_0)}{\|f(x_0)\|} \Big\|
\\
  &=&
  \frac{1}{T} \log \Big \| \frac{\Phi_{x^*}(T)[\frac{D_xf(x^*)(x_0-x^*)}{\|D_xf(x^*)(x_0-x^*)\|}
  + \frac{h(T,x_0-x^*)(x_0-x^*)}{\|D_xf(x^*)(x_0-x^*)\|}]}
  {\|\frac{D_xf(x^*)(x_0-x^*)}{\|D_xf(x^*)(x_0-x^*)\|}
  + \frac{h(T,x_0-x^*)(x_0-x^*)}{\|D_xf(x^*)(x_0-x^*)\|}\|}
  + g(T,x_0-x^*)\frac{f(x_0)}{\|f(x_0)\|} \Big\|
\end{eqnarray*}
where $g(T,x_0-x^*), h(T,x_0-x^*) \in \R^n$ satisfy $\lim \limits_{x_0 \to x^*}g(T,x_0-x^*) = \lim \limits_{x_0 \to x^*}h(T,x_0-x^*) = 0$. Since $\lim \limits_{x_0 \to x^*} \|g(T,x_0-x^*)\frac{f(x_0)}{\|f(x_0)\|}\| = 0$ and
\[
\lim \limits_{x_0 \to x^*} \frac{h(T,x_0-x^*)(x_0-x^*)}{\|D_xf(x^*)(x_0-x^*)\|}  = \lim \limits_{x_0 \to x^*} \frac{h(T,x_0-x^*)\frac{x_0-x^*}{\|x_0-x^*\|}}{\|D_xf(x^*)\frac{(x_0-x^*)}{\|x_0-x^*\|}\|} =0,
\]
we have
\[
\lim_{x_0\to x^*} \alpha(x_0) - \frac{1}{T} \log \frac{\|\Phi_{x^*}(T) D_x f(x^*)(x_0-x^*)\|}{\|D_x f(x^*)(x_0-x^*)\|} = 0.
\]

Using the fact that $\Phi_{x^*}(T)D_xf(x^*) = e^{D_xf(x^*)T}D_xf(x^*) = D_xf(x^*)e^{D_xf(x^*)T} = D_xf(x^*)\Phi_{x^*}(T) $ and the estimates
\begin{eqnarray*}
\frac{\|\Phi_{x^*}(T)(x_0-x^*)\|}{\|D_xf(x^*)^{-1}\|} &\leq& \|D_xf(x^*) \Phi_{x^*}(T)(x_0-x^*) \| \leq \|D_xf(x^*)\| \| \Phi_{x^*}(T)(x_0-x^*) \|, \\
\frac{\|x_0-x^*\|}{\|D_xf(x^*)^{-1}\|} &\leq& \|D_xf(x^*) (x_0-x^*)\| \leq \|D_xf(x^*)\| \|(x_0-x^*)\|,
\end{eqnarray*}
it follows that
\begin{eqnarray*}
\frac{\|\Phi_{x^*}(T)(x_0-x^*)\|}{\|D_xf(x^*)\| \|D_xf(x^*)^{-1}\| \|x_0-x^*\|} &\leq& \frac{\|D_xf(x^*) \Phi_{x^*}(T)(x_0-x^*) \|}{\|D_xf(x^*) (x_0-x^*)\| }\\
&\leq& \|\Phi_{x^*}(T)(x_0-x^*)\| \|D_xf(x^*)\| \|D_xf(x^*)^{-1}\|,
\end{eqnarray*}
or equivalently
\begin{eqnarray}\label{eq08}
&& \Big |\frac{1}{T} \log \frac{\|\Phi_{x^*}(T) D_x f(x^*)(x_0-x^*)\|}{\|D_x f(x^*)(x_0-x^*)\|} - \frac{1}{T} \log \frac{\|\Phi_{x^*}(T)(x_0-x^*)\|}{\|x_0-x^*\|} \Big | \\\nonumber
&& \leq \frac{1}{T} \log \Big(\|D_xf(x^*)\| \|D_xf(x^*)^{-1}\|\Big),
\end{eqnarray}
proving \eqref{gammaAprox}.
\end{proof}


The following theorem states that for planar systems \eqref{autonom1} minima and maxima of the weighted FTME field \eqref{FTMEfield} in the vicinity of an equilibrium indicate Lagrangian coherent structures which locally approximate the classical unstable and stable manifolds. The theorem also holds in higher dimensions for one-dimensional strongly unstable and strongly stable manifolds.


\begin{theorem}\label{autonomentropy}
Consider a two-dimensional system \eqref{autonom1} on an open set $U \subset \R^2 $ and assume that it has an isolated equilibrium $x^* \in U$ which is hyperbolic. Let $\lambda_1 > 0 > \lambda_2$ denote the eigenvalues of $D_xf(x^*)$ with corresponding normalized eigenvectors $e_1, e_2$. Then for all $\eps \in (0, \min\{\lambda_1, -\lambda_2\})$ there exists $T(\eps) > 0$, satisfying that for all $T \geq T(\eps)$ there exists a $\delta(T) > 0$ such that the weighted FTME field $x \mapsto H(x) := h^{\alpha(x,T,f(x))}(x)$ in formula \eqref{FTMEfield} satisfies the following properties:
\begin{itemize}
  \item[(i)] \emph{Bound for weighted FTME field:}
    $H(x) \in [0, \lambda_1 -\lambda_2 + \epsilon)$ for $x \in B(x^*,\delta(T))$

  \item[(ii)] \emph{Unstable cone contains unstable manifold and has minimal FTME values:}
    The so-called \emph{unstable cone} at $x^*$
    \[
      C^u
      =
      \Big\{x_0 \in \R^2 :
      \sin \big|\angle\big(\tfrac{x_0-x^*}{\|x_0-x^*\|}, e_2\big)\big| \geq 2 e^{-\frac{\eps}{4} T}
      \Big\}
    \]
    contains a piece of the unstable manifolds
    $W^u = \{x_0 \in U: \lim \limits_{t \to -\infty} \varphi(t,x_0) = x^*\}$ at $x^*$
    \[
      W^u \cap B(x^*,\delta(T)) \subset C^u
    \]
    and $H(x) \in [0,\eps)$ for all $x \in C^u \cap B(x^*,\delta(T))$.

  \item[(iii)] \emph{Stable cone contains stable manifold and has maximal FTME values:}
    The so-called \emph{stable cone} at $x^*$
    \[
      C^s
      =
      \Big\{x_0 \in \R^2 :
      \sin \big|\angle\big(\tfrac{x_0-x^*}{\|x_0-x^*\|}, e_2\big)\big| \leq \tfrac{\eps}{4} e^{(\lambda_2-\lambda_1) T}
      \Big\}
    \]
    contains a piece of the stable manifolds
    $W^s = \{x_0 \in U : \lim \limits_{t \to \infty} \varphi(t,x_0) = x^*\}$ at $x^*$
    \[
      W^s \cap B(x^*,\delta(T)) \subset C^s
    \]
    and $H(x) > \lambda_1 - \eps$ for all $x \in C^s \cap B(x^*,\delta(T))$. Moreover, along the stable manifold
    $H(x) \in (\lambda_1 - \lambda_2 - \eps, \lambda_1 - \lambda_2 + \eps)$ for all $x \in W^s \cap B(x^*,\delta(T))$.
\end{itemize}
\end{theorem}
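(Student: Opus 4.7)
The plan is to reduce everything to the linearization at $x^*$ and then invoke the exact two-dimensional formula from Proposition \ref{Pesin2D}. By Theorem \ref{FTEcompute}, $H(x)$ equals the corresponding FTME of the linear cocycle $\Phi_x(\cdot)$, so Proposition \ref{Pesin2D} gives $H(x)$ explicitly in terms of $\kappa_i(x) := e^{(\lambda_i(x,T)-\alpha(x,T,f(x)))T}$, $i=1,2$. Lemma \ref{lemma1} lets me replace $\alpha(x,T,f(x))$ by its linearized counterpart $\tilde\alpha(x) := \frac{1}{T}\log\frac{\|\Phi_{x^*}(T)(x-x^*)\|}{\|x-x^*\|}$ up to an $O(1/T)$ error, and Theorem \ref{FTMEnorm} lets me work in a basis in which $e_1,e_2$ are orthonormal at the cost of another $O(1/T)$ correction. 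As in Example \ref{exlinear}, one has $|\lambda_i(x^*,T)-\lambda_i| = O(1/T)$, and by continuity of $x\mapsto \Phi_x(T)$ the same holds for $\lambda_i(x,T)$ uniformly on a small ball around $x^*$. Part (i) is then immediate from Theorem \ref{thm2}: since $\alpha(x,T,f(x)) \in [\lambda_2(x,T),\lambda_1(x,T)]$, the bound gives $H(x) \le \lambda_1(x,T)-\lambda_2(x,T) < \lambda_1-\lambda_2+\eps$.

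\textbf{Cone analysis.} Decomposing $x-x^* = c_1 e_1 + c_2 e_2$ one obtains $\tilde\alpha(x) = \frac{1}{2T}\log\frac{c_1^2 e^{2\lambda_1 T}+c_2^2 e^{2\lambda_2 T}}{c_1^2+c_2^2}$. In $C^u$ the condition $|c_1|/\|x-x^*\| \ge 2 e^{-\eps T/4}$ forces $\tilde\alpha(x) \ge \lambda_1 - \eps/4$; combined with $\alpha(x,T,f(x)) \le \lambda_1(x,T) \le \lambda_1 + O(1/T)$, this gives $1 \le \kappa_1(x) \le e^{\eps T/2}$ while $\kappa_2(x) \le e^{(\lambda_2-\lambda_1+\eps)T}$ is exponentially small. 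An asymptotic analysis of the middle case of Proposition \ref{Pesin2D}, using $\arccos\sqrt{(\kappa_1^2-1)/(\kappa_1^2-\kappa_2^2)} \to \pi/2$ and $\frac{1}{\kappa_1\kappa_2}\arccos(\kappa_1\sqrt{(1-\kappa_2^2)/(\kappa_1^2-\kappa_2^2)}) \to 0$ in this regime, yields $H(x) < \eps$, which is (ii). In $C^s$ the bound $|c_1|/\|x-x^*\| \le \frac{\eps}{4} e^{(\lambda_2-\lambda_1)T}$ makes the $e_2$-component dominate, forcing $\tilde\alpha(x) \le \lambda_2 + O(\eps/T)$; combined with $\alpha(x,T,f(x)) \ge \lambda_2(x,T) \ge \lambda_2 - O(1/T)$, one gets $\kappa_2(x)$ close to $1$ and $\kappa_1(x)$ exponentially large. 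Proposition \ref{Pesin2D} then yields $H(x) \approx \frac{1}{T}\log\kappa_1(x) \approx \lambda_1-\lambda_2$, which certainly exceeds $\lambda_1-\eps$; on $W^s$ itself the tangency $c_1/c_2 = O(\|x-x^*\|)$ tightens this to $H(x) \in (\lambda_1-\lambda_2-\eps,\lambda_1-\lambda_2+\eps)$.

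\textbf{Inclusions and choice of $\delta(T)$.} By the stable manifold theorem $W^u$ is tangent to $e_1$ at $x^*$, so $|\sin \angle(x-x^*,e_2)| \to 1$ along $W^u$, exceeding $2e^{-\eps T/4}$ in a $T$-independent neighborhood; hence $W^u \cap B(x^*,\delta(T)) \subset C^u$. The manifold $W^s$ is tangent to $e_2$, so $|\sin \angle(x-x^*,e_2)| = O(\|x-x^*\|)$ along $W^s$, and $W^s\cap B(x^*,\delta(T)) \subset C^s$ is guaranteed provided $\delta(T) \le c\,\eps\, e^{(\lambda_2-\lambda_1)T}$ for a suitable constant $c$. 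Setting $\delta(T)$ to be the minimum of this bound and the neighborhood constraints arising in the first two paragraphs completes the proof. The main obstacle is exactly this exponentially small $\delta(T)$: it forces all $O(1/T)$ and $O(\eps)$ error terms coming from Lemma \ref{lemma1}, the continuity of $\Phi_x(T)$, and the change-of-basis step to be controlled uniformly as $T\to\infty$ and $\|x-x^*\| \to 0$ jointly, which requires keeping explicit track of how the constants depend on the shrinking radius of the neighborhood.
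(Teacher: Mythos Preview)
Your overall strategy matches the paper's: reduce $H(x)$ to data at $\Phi_{x^*}(T)$ via Theorem \ref{FTEcompute} and Lemma \ref{lemma1}, then analyze the growth rate on the cones. But you take a detour the paper avoids. For (ii) and (iii) you invoke the exact formula of Proposition \ref{Pesin2D} and an asymptotic analysis of the arccos terms, whereas the paper uses only the Pesin bound (Theorem \ref{thm2}), which you already employ for (i). Since $\alpha(x)\in[\lambda_2(x,T),\lambda_1(x,T)]$, the sum in Theorem \ref{thm2} collapses to the single term $\lambda_1(x,T)-\alpha(x)$, giving $|H(x)-(\lambda_1(x,T)-\alpha(x))|\le C/T$; combined with Lemma \ref{lemma1} and continuity of $\lambda_1(\cdot,T)$ this yields the master estimate $|H(x)-\lambda_1+\tilde\alpha(x)|\le\eps/2$ that the paper uses for all three parts. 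Your asymptotic claim ``$\arccos\sqrt{(\kappa_1^2-1)/(\kappa_1^2-\kappa_2^2)}\to\pi/2$'' is in fact wrong at the upper end of the $C^u$ range: $\kappa_1$ may be as large as $e^{\eps T/2}$, and there the argument of the arccos tends to $1$, not $0$. The conclusion $H(x)<\eps$ survives (essentially by monotonicity in $\kappa_1$), but your stated reasoning does not establish it; either carry out the analysis uniformly over $\kappa_1\in[1,e^{\eps T/2}]$ or, more simply, use Theorem \ref{thm2} throughout as the paper does.

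The second difference is your eigenbasis decomposition $x-x^*=c_1e_1+c_2e_2$ together with a norm change via Theorem \ref{FTMEnorm}, versus the paper's use of the orthogonal \emph{singular} vectors $v_1,v_2$ of $\Phi_{x^*}(T)$. The paper first shows $|\sin\angle(e_2,v_2)|<e^{-\eps T/4}$, so the cone bound $|\sin\angle(v,e_2)|\ge 2e^{-\eps T/4}$ transfers to $|\langle v,v_1\rangle|\ge e^{-\eps T/4}$, from which $\|\Phi_{x^*}(T)v\|$ and hence $\tilde\alpha(x)$ are immediately controlled. Your route is workable, but note that Theorem \ref{FTMEnorm} controls only the FTME under the norm change; the stretching rate $\tilde\alpha$ and the Euclidean-angle cone conditions also change and require a separate (elementary) norm-equivalence argument that you should spell out. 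Finally, your worry that the exponentially small $\delta(T)$ forces joint tracking of constants is overstated: the $O(1/T)$ errors from Theorem \ref{thm2} and Lemma \ref{lemma1} are uniform in $x$ and do not interact with the shrinking radius.
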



\begin{proof}
(i) By assumption, $\Phi_{x^*}(T) e_{i} = e^{\lambda_{i} T} e_{i}$ for $i=1,2$. For $x \in U$ let $v_1(x,T), v_2(x,T)$ denote
the normalized singular vectors of $\Phi_{x}(T)$ w.r.t.\ the singular values $e^{\lambda_1(x,T) T}$, $e^{\lambda_2(x,T) T}$, i.e.\ $\Phi_{x}(T)^{\rT}\Phi_{x}(T)v_{i}(x,T) = e^{2\lambda_{i}(x,T) T} v_{i}(x,T)$ and $v_1(x,T) \perp v_2(x,T)$. Moreover, it follows that $\lim_{T \to \infty} \lambda_{i}(x^*,T) = \lambda_{i}$.

Choose and fix $\eps \in (0, \min\{\lambda_1, -\lambda_2\})$. Then there exists $T(\eps)>0$ such that for $i=1,2$
\begin{equation}\label{eq09}
  \big| \lambda_{i}(x^*,T) - \lambda_{i} \big|
  \leq
  \tfrac{\epsilon}{16}
  \qquad \text{for all }
  T \geq T(\eps)
  .
\end{equation}
With the abbreviations $v_i = v_i(x,T)$ and using the fact that $e_2 = \langle e_2, v_1\rangle v_1 + \langle e_2, v_2\rangle v_2$ with $\langle e_2, v_1\rangle^2 + \langle e_2, v_2\rangle^2 = 1$, it follows that
\begin{eqnarray*}
  1
  &=&
  e^{-2\lambda_2 T} \|\Phi_{x^*}(T)e_2 \|^2
  =
  e^{-2\lambda_2 T} \big\langle \Phi_{x^*}(T)^{\rT}\Phi_{x^*}(T) e_2,e_2 \big\rangle
\\[0.5ex]
  & = &
  \langle e_2, v_1\rangle^2 e^{2 (\lambda_1(T,x^*) - \lambda_2)T} +
  \langle e_2, v_2\rangle^2 e^{2 (\lambda_2(T,x^*)-\lambda_2) T}
\end{eqnarray*}
and consequently
\begin{eqnarray*}
  \langle e_2, v_1\rangle^2
  & = &
  \|e_2\|^2 \|v_1\|^2 \cos^2 \angle(e_2,v_1)
  =
  \sin^2 \angle (e_2,v_2)
\\[0.5ex]
  & \leq &
  e^{2 (\lambda_2 - \lambda_1(x^*,T))T}
  \leq
  e^{2 (\lambda_2 - \lambda_1 + \frac{\epsilon}{16})T} < e^{-\frac{\epsilon}{2}T}
  ,
\end{eqnarray*}
which implies that $ |\sin \angle (e_2,v_2)| < e^{-\frac{\eps}{4}T}$.

The directional growth rate $\alpha(x) = \alpha(x,T,f(x))$ in \eqref{FTMEfield} satisfies $\alpha(x) \in [\lambda_2(x), \lambda_1(x)]$ and hence $\big(\lambda_1(x,T) - \alpha(x)\big)^+ = \lambda_1(x,T) - \alpha(x)$ and $\big(\lambda_2(x,T) - \alpha(x)\big)^+ = 0$. By enlarging $T(\eps)$ if necessary, it therefore follows from \eqref{approx} in Theorem \ref{thm2} that
\begin{equation}\label{eq04}
  \big| H(x) - (\lambda_1(x,T) - \alpha(x)) \big|
  \leq \tfrac{\eps}{8}
  \qquad \text{for all }
  x \in U, T \geq T(\eps)
  .
\end{equation}

Using the continuity of $x \mapsto \lambda_1(x,T)$ we choose for every $T \geq T(\eps)$ a $\delta = \delta(T) > 0$ such that $B(x^*,\delta) \subset U$ and
\[
  \big| \lambda_1(x,T) - \lambda_1(x^*,T) \big|
  \leq
  \tfrac{\eps}{16}
  \qquad \text{for all }
  x \in B(x^*,\delta)
  ,
\]
which implies that for $x \in B(x^*,\delta)$
\begin{equation}\label{eq06}
  \big| \lambda_1(x,T) - \lambda_1 \big|
  \leq
  \big|\lambda_1(x,T) - \lambda_1(x^*,T) \big| +
  \big| \lambda_1 - \lambda_1(x^*,T) \big|
  \leq
  \tfrac{\eps}{16} + \tfrac{\eps}{16}
  =
  \tfrac{\eps}{8}.
\end{equation}

By enlarging $T(\eps)$ and shrinking $\delta = \delta(T) > 0$ for $T \geq T(\eps)$ if necessary, it follows from  Lemma \ref{lemma1} that
\begin{equation}\label{eq07}
  \bigg| \alpha(x) - \frac{1}{T} \log \frac{\|\Phi_{x^*}(T) D_x f(x^*)(x-x^*)\|}
  {\|D_x f(x^*)(x-x^*)\|} \bigg|
  \leq
  \frac{\eps}{8}
  \qquad \text{for all }
  x \in B(x^*,\delta)
  .
\end{equation}
Combining \eqref{eq04}, \eqref{eq06} and \eqref{eq07}, we conclude that
\begin{equation}\label{eq10}
  \bigg|
  H(x) - \lambda_1 + \frac{1}{T} \log \frac{\|\Phi_{x^*}(T)(x-x^*)\|}{\|x-x^*\|}
  \bigg|
  \leq
  \frac{\eps}{2}
  \qquad \text{for }
  T \geq T(\eps),
  x \in B(x^*,\delta(T))
  .
\end{equation}
This, together with the estimate $\frac{1}{T} \log \frac{\|\Phi_{x^*}(T)(x-x^*)\|}{\|x-x^*\|} > \lambda_2(x^*,T) > \lambda_2 - \frac{\eps}{16}$, implies
\[
  H(x)
  <
  \lambda_1 -
  \frac{1}{T} \log \frac{\|\Phi_{x^*}(T)(x-x^*)\|}{\|x-x^*\|} + \frac{\eps}{2} < \lambda_1 - \lambda_2 + \eps,
\]
proving $H(x) \in [0,\lambda_1 - \lambda_2 + \eps)$ for all $x \in B(x^*,\delta(T))$.

(ii) By enlarging $T(\eps)$ if necessary, we ensure that for all $T \geq T(\eps)$ the estimates $\sin|\angle(e_1,e_2)| > 2 e^{-\frac{\eps}{4}T} > \frac{\eps}{4}e^{(\lambda_2-\lambda_1)T}$ hold and hence $e_1 \in C^u$. The unstable manifold $W^u$ is tangential to $e_1$ in $x^*$, i.e.
\[
  \lim_{\substack{x \to x^*,\\ x \in W^u}} \frac{x - x^*}{\|x - x^*\|} = e_1
  .
\]
By shrinking $\delta = \delta(T) > 0$ for $T \geq T(\eps)$ if necessary, we can therefore ensure that $W^u \cap B(x^*,\delta(T)) \subset C^u$.

Let $x \in W^u \cap B(x^*,\delta(T))$, $x \neq x^*$. Define $v = \frac{x-x^*}{\|x-x^*\|}$. Then $|\sin \angle (v,e_2)| \geq 2 e^{-\frac{\epsilon}{4}T}$. Since
\[
|\sin \angle (v,e_2)| = |\sin( \angle (v,v_2) + \angle (v_2,e_2))| \leq |\sin \angle (v,v_2)| + |\sin \angle (v_2,e_2)|,
\]
it follows that $|\langle v,v_1\rangle| = |\cos \angle(v,v_1)| =|\sin \angle (v,v_2)| \geq  e^{-\frac{\epsilon}{4}T}$. We then have
\[
e^{2\lambda_1(x^*,T)T} \geq \|\Phi_{x^*}(T)v\|^2 = \langle v,v_1\rangle^2 e^{2\lambda_1(x^*,T)T} +  \langle v,v_2\rangle^2 e^{2\lambda_2(x^*,T)T}  \geq e^{2(\lambda_1(x^*,T)-\frac{\epsilon}{4})T}.
\]
Hence $|\frac{1}{T} \log \|\Phi_{x^*}(T)v\| - \lambda_1(x^*,T)| \leq \frac{\epsilon}{4}$. Combining with \eqref{eq09} and \eqref{eq10} it follows that $H(x_0) < \epsilon$ and (b) is proved.

(iii) As in (ii) we can ensure that $W^s \cap B(x^*,\delta(T)) \subset C^u$ by shrinking $\delta(T)>0$ if necessary. Let $x \in W^s \cap B(x^*,\delta(T))$, $x \neq x^*$, and define $v = \frac{x-x^*}{\|x-x^*\|}$. Then $|\sin \angle (v,e_2)| \leq \frac{\eps}{4} e^{(\lambda_2 - \lambda_1)T}$. Since $\eps < \min \{- \lambda_2,\lambda_1\}$, we enlarge $T(\eps)$ is necessary, to ensure that for all $T \geq T(\eps)$ the estimate $\frac{\eps}{4} > e^{(\lambda_2 + \frac{\eps}{8})T}$ holds. We have
\begin{eqnarray*}
  | \sin \angle (v,e_2) |
  & < &
  e^{(-\lambda_1 - \frac{\eps}{16})T}
  (\tfrac{\eps}{2} - e^{(\lambda_2+\frac{\eps}{8})T})
\\
  & < &
  e^{(-\lambda_1 - \frac{\eps}{16})T}
  (e^{\frac{\eps}{2}T}-1 - e^{(\lambda_2+\frac{\eps}{8})T})
\\
  & < &
  e^{(-\lambda_1 - \frac{\eps}{16})T} (e^{\frac{\eps}{2}T}-1) -
  e^{(\lambda_2-\lambda_1 +\frac{\eps}{16})T}
  ,
\end{eqnarray*}
hence
\begin{eqnarray*}
  |\langle v,v_1\rangle |
  & = &
  |\sin \angle (v,v_2)| < |\sin \angle (v,e_2)| + |\sin \angle (v_2,e_2)|
\\
  & < &
  e^{(-\lambda_1 - \frac{\eps}{16})T}
  (e^{\frac{\eps}{2}T}-1) < (e^{\frac{\eps}{2}T}-1) e^{-\lambda_1(T,x^*)T}
  .
\end{eqnarray*}
It follows that
\begin{eqnarray*}
  \|\Phi_{x^*}(T)v\|^2
  & = &
  \langle v, v_1 \rangle^2 e^{2\lambda_1(x^*,T)T} + \langle v, v_2 \rangle^2 e^{2\lambda_2(x^*,T)T}
\\
  & \leq &
  (e^{\frac{\eps}{2}T} - 1)^2 + 1 < e^{\eps T}
  ,
\end{eqnarray*}
or equivalently $\frac{1}{T}\log \|\Phi_{x^*}(T)v\| < \frac{\eps}{2}$. Therefore
\[
  H(x)
  >
  \lambda_1 - \frac{1}{T}\log \|\Phi_{x^*}(T)v\| -
  \frac{\eps}{2} > \lambda_1 - \eps
  \qquad \text{for all }
  x \in C^s \cap B(x^*,\delta(T))
  .
\]
By enlarging $T(\eps)$ and shrinking $\delta(T) > 0$ if necessary, we can ensure that for $T \geq T(\eps)$ and $x \in M^s \cap B(x^*,\delta(T))$
\begin{equation}\label{eq11}
  \bigg| \frac{1}{T} \log \frac{\|\Phi_{x^*}(T)(x-x^*)\|}{\|x-x^*\|} -
  \frac{1}{T} \|\Phi_{x^*}(T) e_2\| \bigg|
  =
  \bigg| \frac{1}{T} \log \frac{\|\Phi_{x^*}(T)(x-x^*)\|}{\|x-x^*\|} - \lambda_2 \bigg|
  \leq
  \frac{\eps}{2}
  .
\end{equation}
Combining \eqref{eq10} and \eqref{eq11}, we get
\[
  H(x)
  >
  \lambda_1 -
  \frac{1}{T} \log \frac{\|\Phi_{x^*}(T)(x-x^*)\|}{\|x-x^*\|} -
  \frac{\eps}{2} > \lambda_1 - \lambda_2 - \eps
  \qquad \text{for all }
  x \in M^s \cap B(x^*,\delta(T))
  ,
\]
proving (iii).
\end{proof}

Figures \ref{fig:1FTME3D} and \ref{fig:2FTME3D} illustrate the statement of Theorem \ref{autonomentropy} for the Examples \ref{exlinear} and \ref{parabola}. Note that Theorem \ref{autonomentropy} states the existence of ridge and trough-like structures of order zero, i.e.\ described by values of the weighted FTME field $H(x)$ and not by conditions which also utilize  its first and second order derivatives (cp.\ also the different ridge notions in \cite{eberly-etal1994}).

\begin{figure}[ht!]
\centering
\begin{minipage}[b]{0.45\linewidth}
\centerline{\includegraphics[width=10cm]{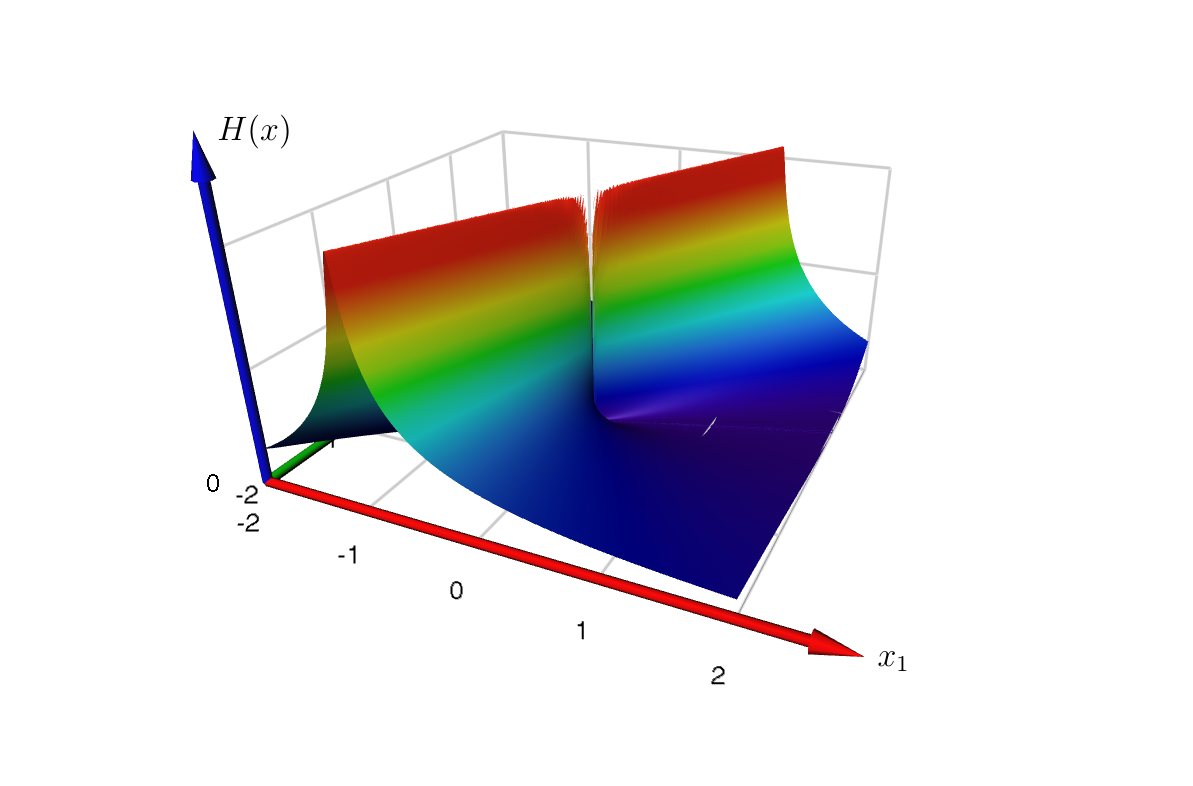}}
\caption{Weighted FTME field \eqref{FTMEfield} as depicted in Figure \ref{fig:linear2}.}
\label{fig:1FTME3D}
\end{minipage}
\hfill
\begin{minipage}[b]{0.45\linewidth}
\centerline{\includegraphics[width=10cm]{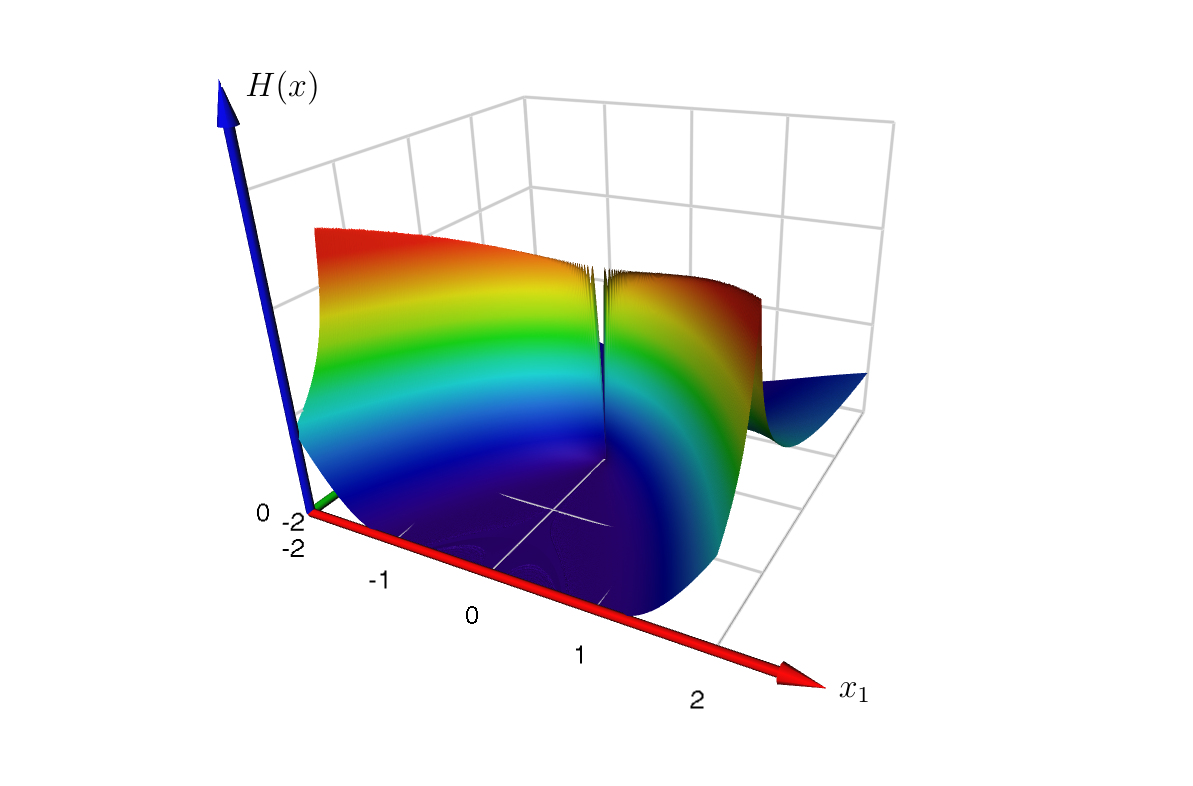}}
\caption{Weighted FTME field \eqref{FTMEfield} as depicted in Figure \ref{fig:FTME}.}
\label{fig:2FTME3D}
\end{minipage}
\end{figure}

\begin{remark}\label{rem:LCS}
An important feature of Lagrangian coherent structures which ensures objectivity in the sense of \emph{frame-independence} is formulated as invariance under time-dependent transformations of the form $y = Q(t)x + a(t)$, where $y$ denotes the new variable, $Q(t)$ is an orthogonal matrix and $a(t)$ is a translation vector (see e.g.\ \cite{haller2001}). It follows directly from Corollary \ref{thm-ellipsoid} that FTME is frame-independent, because FTME is characterized by the Lebesgue measure of the intersection of ellipsoids with balls and the volume of the intersection does not change under rotations and translations. However, the weight $\alpha(x,T,f(x))$ of the weighted FTME field \eqref{FTMEfield} depends on $x$ and the vector field $f(x)$ and is therefore in general not frame-independent. It is chosen such that it emphasizes the role of the equilibrium and its stable and unstable manifolds which occur as ridge and trough-like structures.
\end{remark}


\section*{Acknowledgments}

This work was partially supported by the German Science Foundation DFG under grant Si801-6 and DFG excellence cluster cfAED, and by Vietnam National Foundation for Science and Technology Development (NAFOSTED) under grant number 101.02-2011.47. The authors thank Tino Weinkauf for providing the figures and Gary Froyland, George Haller and Kathrin Padberg-Gehle for interesting discussions.


\end{document}